\newcommand{\arXiv}[1]{arXiv:\href{http://arXiv.org/abs/#1}{#1}}
\newcommand{\mydoi}[1]{doi:\href{https://doi.org/#1}{#1}}
\pgfplotsset{compat=1.17}
\theoremstyle{plain}
\newtheorem{theorem}{Theorem}
\numberwithin{theorem}{section}
\numberwithin{equation}{section}
\newcommand{\R}{\mathbb{R}}
\newcommand{\Z}{\mathbb{Z}}
\newcommand{\CC}{\mathbb{C}}
\newcommand{\vol}[1]{\mathop{\textup{vol}}\mathopen{}\big(#1\big)\mathclose{}}
\newcommand{\SL}{\mathop{\textup{SL}}\nolimits}
\newcommand{\uhp}{\mathcal{H}}
\renewcommand{\Im}{\operatorname{Im}}
\renewcommand{\Re}{\operatorname{Re}}
\begin{document}

\firstpage{1}
\doinumber{213}
\volume{1}
\copyrightyear{2022}

\title{The work of Maryna\\ Viazovska}
\titlemark{The work of Maryna Viazovska}

\emsauthor{1}{Henry Cohn}{H.~Cohn}

\emsaffil{1}{Microsoft Research New England, One Memorial Drive, Cambridge, MA 02140, USA \email{cohn@microsoft.com}}

\classification[11F03, 11H31]{52C17}

\keywords{Sphere packing, modular forms}

\begin{abstract}
On July 5th, 2022, Maryna Viazovska was awarded a Fields Medal for her solution of the sphere packing problem in eight dimensions, as well as further contributions to related extremal problems and interpolation problems in Fourier analysis. This article explains some of the ideas behind her work to a broad mathematical audience.
\end{abstract}

\maketitle

\section{Introduction}

The sphere packing problem asks how we can fill as large a fraction of space as possible with congruent balls, if they are not allowed to overlap except tangentially.\footnote{To state the problem precisely, ``as large a fraction as possible'' must be made precise. One way to do so is by taking a limit of the packing problem in a bounded region as its size grows relative to the sphere radius. The sphere packing problem turns out to be very robust, in the sense that just about all reasonable formulations are equivalent.} This problem sits at the interface between many branches of mathematics, and of science more generally, with connections ranging from materials science to information theory. Sphere packing is a natural problem in Euclidean geometry, with a simple statement, and one might expect an equally elementary and self-contained solution. Instead, the topic is dominated by unexpected connections.

Before Viazovska's breakthrough work, the optimal sphere packing density was known only in one, two, and three dimensions. One dimension is trivial, because intervals can tile the real line with density~$1$. Two dimensions is not trivial, but Thue \cite{T1892} showed that arranging six neighbors around each disk is optimal, with density~$\pi/\sqrt{12} = 0.9068\dots$. Three dimensions was solved by Hales \cite{H2005} via an ingenious and elaborate computer-assisted proof, which has since been formally verified \cite{Hplus2017}. The unsurprising answer is shown in Figure~\ref{figure:3d}: optimal two-dimensional layers are nestled together as densely as possible, to achieve density $\pi/\sqrt{18} = 0.7404\dots$.

\begin{figure}[t]
\begin{center}
\includegraphics[width=2in]{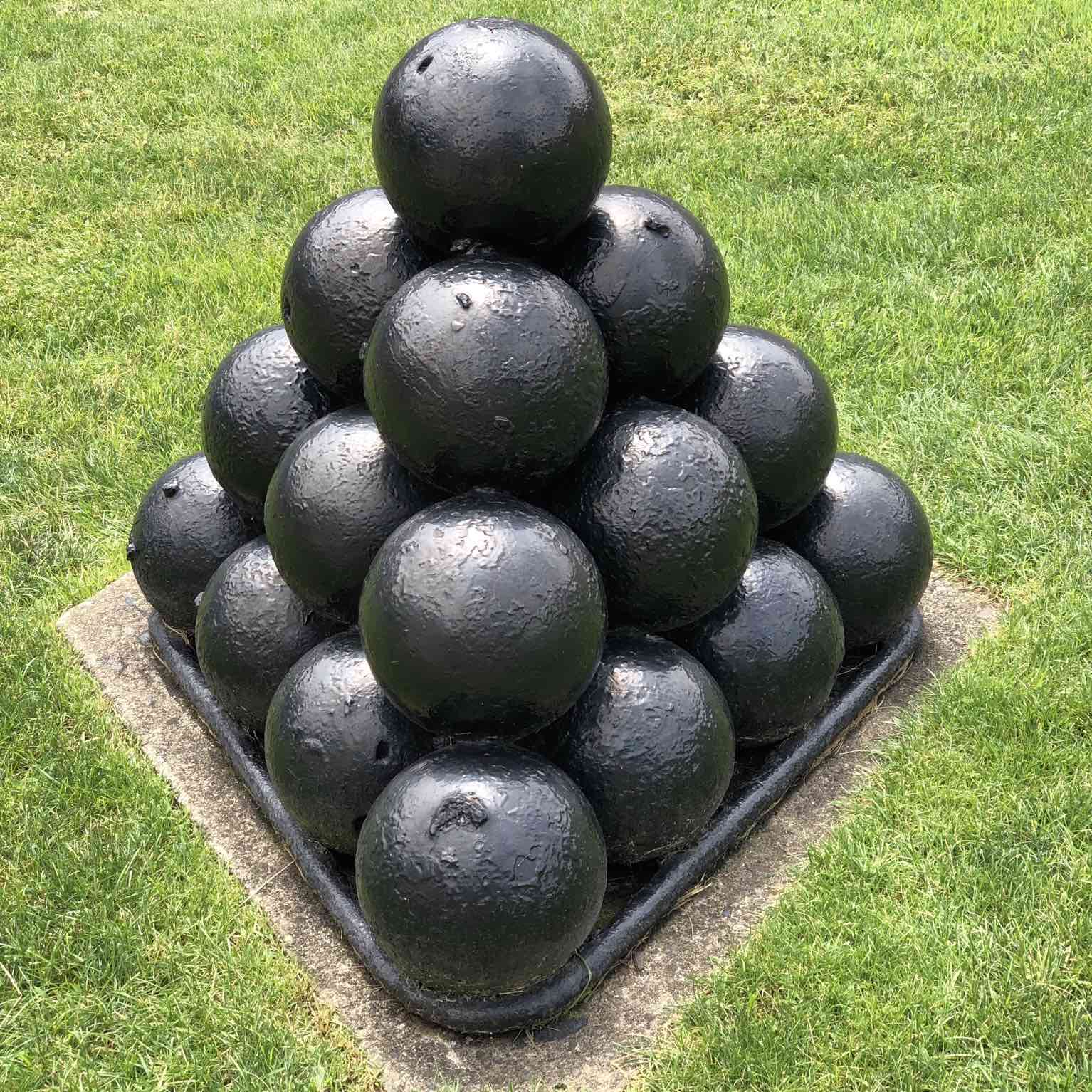}
\end{center}
\caption{\\An optimal packing of cannonballs.}
\label{figure:3d}
\end{figure}

These prior results paint a misleading picture of what happens in higher dimensions. Stacking optimal layers from the previous dimension generally produces suboptimal packings, and nobody has any idea what the densest sphere packings might be in most dimensions. We do not even know whether they should be crystalline or disordered.

High-dimensional packings are not merely of pure mathematical interest, but also important for practical applications, because sphere packings are error-correcting codes for a continuous communication channel (such as radio). In this model, the packing is in an abstract signal space, whose dimension is the number of measurements used to characterize the signal and is generally much larger than three.

There does not seem to be any simple pattern in the optimal packings that persists across many dimensions, and the best upper and lower bounds known for the packing density in $\R^d$ remain exponentially far apart as $d$ grows. However, a handful of dimensions stand out as special, most notably eight and twenty-four dimensions. These dimensions feature exceptional packings, namely the $E_8$ root lattice and the Leech lattice $\Lambda_{24}$, with remarkable symmetries and numerous connections to different branches of mathematics. Thanks to Viazovska's work \cite{V2017, CKMRV2017}, we now know that they are truly optimal. The jump from three dimensions to eight and twenty-four in the known solutions is remarkable, and it illustrates the exceptional nature of these packings.

The $E_8$ and Leech lattices had long been viewed as the most compelling candidates for further solutions of the sphere packing problem. However, a direct geometric proof seems infeasible: it is natural to try to work with a decomposition of space into cells, but the curse of dimensionality means we are faced with an unmanageable number of potential cell shapes and ways they could adjoin each other. Perhaps there exists a proof along these lines, but nobody has found a workable approach.

Instead, Viazovska proved the optimality of $E_8$ via a dramatic new connection to the theory of modular forms, following which she and several collaborators extended her ideas to the case of the Leech lattice:

\begin{theorem}[Viazovska \cite{V2017}] \label{thm:dim8}
The $E_8$ root lattice achieves the optimal sphere packing density in $\R^8$, namely $\pi^4/384$.
\end{theorem}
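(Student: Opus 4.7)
The plan is to apply the linear programming bound of Cohn and Elkies, which bounds the sphere packing density in $\R^8$ by $\vol{B_{\sqrt{2}/2}} \cdot f(0)/\hat{f}(0)$ for any radial Schwartz function $f \colon \R^8 \to \R$ with $\hat{f}(0) > 0$, $f(x) \leq 0$ for $|x| \geq \sqrt{2}$, and $\hat{f}(y) \geq 0$ for all $y \in \R^8$. A direct computation gives $\vol{B_{\sqrt{2}/2}} = \pi^4/384$, so it suffices to exhibit a \emph{magic function} $f$ that makes the bound tight, normalized so that $f(0) = \hat{f}(0)$. Tracing the equality conditions of the Poisson summation proof, and using that $E_8$ is unimodular with nonzero vector lengths $\sqrt{2n}$ for integers $n \geq 1$, one finds that $f$ and $\hat{f}$ must both vanish at every $|x| = \sqrt{2n}$, with $\hat{f}$ having a double zero at each and with $f$ having a simple zero at $\sqrt{2}$ and double zeros beyond.

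The key idea for constructing $f$ is to exploit the Fourier symmetry by writing $f = f_+ + f_-$, where $f_\pm$ are $\pm 1$ eigenfunctions of the radial Fourier transform on $\R^8$. Each $f_\pm$ must then satisfy its own interpolation problem on the sequence $\sqrt{2n}$, with prescribed values and derivatives combining to give the required zeros of $f$ and of $\hat{f}$. To parametrize such eigenfunctions I would use a Laplace-type transform $f_\pm(x) = \int_L \varphi_\pm(\tau)\, e^{\pi i \tau |x|^2}\, d\tau$ along a contour $L$ in the upper half plane $\uhp$, where $\varphi_\pm$ is a weakly holomorphic (quasi-)modular form. The action $\tau \mapsto -1/\tau$ on $\varphi_\pm$ then corresponds to the Fourier transform on $\R^8$, and the eigenfunction condition together with the prescribed double vanishing at $|x|^2 \in 2\Z_{>0}$ translates into conditions that can be solved within a low-dimensional space of modular and quasi-modular forms for $\SL_2(\Z)$, built from the Eisenstein series $E_2$, $E_4$, $E_6$, and the discriminant $\Delta$.

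The hardest step is verifying the sign inequalities $f(x) \leq 0$ for $|x| \geq \sqrt{2}$ and $\hat{f}(y) \geq 0$ for all $y$. The prescribed zeros, the eigenfunction property, and the normalization $f(0) = \hat{f}(0)$ all follow formally from the modular transformation laws together with a finite computation of Fourier coefficients; by contrast, these sign conditions are genuine global inequalities that cannot be extracted from any formal symmetry. I expect to reduce them to definite-sign statements about the $q$-expansions of the forms $\varphi_\pm$ after a careful deformation of the contour $L$, and the bulk of the technical work will lie there. Once the signs are established, the Cohn-Elkies bound evaluates exactly to $\pi^4/384$, matching the density of $E_8$, and the theorem follows.
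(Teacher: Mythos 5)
Your proposal follows essentially the paper's own route: the Cohn--Elkies bound with $r=\sqrt{2}$, the eigenfunction splitting $f=f_++f_-$, Laplace/contour-integral transforms of weakly holomorphic (quasi)modular forms against complex Gaussians $e^{\pi i \tau|x|^2}$ with prescribed roots at $\sqrt{2n}$, and the sign conditions reduced to definite-sign statements about the forms along the imaginary axis. The only correction is that the $-1$ eigenfunction cannot be produced from level-one forms---the required functional equations (of the shape $\psi = \psi|_{-2}T + \psi|_{-2}S$) are inconsistent with full $\SL_2(\Z)$ invariance---so there $\psi$ is taken weakly modular of weight $-2$ for the congruence subgroup $\Gamma(2)$ (built from the theta-constant generators $U,W$ rather than from $E_4,E_6$ alone), while only the $+1$ eigenfunction uses the $\SL_2(\Z)$ quasimodular form $(E_2E_4-E_6)^2/\Delta$.
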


\begin{theorem}[Cohn, Kumar, Miller, Radchenko, and Viazovska \cite{CKMRV2017}] \label{thm:dim24}
The Leech lattice $\Lambda_{24}$ achieves the optimal sphere packing density in $\R^{24}$, namely $\pi^{12}/12!$.
\end{theorem}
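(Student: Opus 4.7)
The plan is to apply the Cohn--Elkies linear programming bound, which reduces the sphere packing upper bound in $\R^{24}$ to constructing a single ``magic function''. Precisely, suppose $f \colon \R^{24} \to \R$ is a Schwartz function (not identically zero) with $f(0) = \hat{f}(0) > 0$, $f(x) \le 0$ for $|x| \ge 2$, and $\hat{f}(y) \ge 0$ for every $y \in \R^{24}$; then the sphere packing density in $\R^{24}$ is at most $\vol{B_1} = \pi^{12}/12!$. The Leech lattice $\Lambda_{24}$ is unimodular with minimum norm $|v|^2 = 4$, so its own packing density equals $\pi^{12}/12!$, and exhibiting such an $f$ would match the two bounds. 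For the argument to be sharp, $f$ must vanish to second order on each sphere of radius $\sqrt{2n}$, $n \ge 2$, mirroring the nonzero vector lengths of $\Lambda_{24}$; the same condition is imposed on $\hat{f}$, since $\Lambda_{24}$ is self-dual.

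Following Viazovska's template from Theorem~\ref{thm:dim8}, I would write $f = f_+ + f_-$, where $f_+$ and $f_-$ are radial Schwartz functions satisfying $\hat{f_+} = f_+$ and $\hat{f_-} = -f_-$. The Cohn--Elkies inequalities then reduce to pointwise bounds on $f_+ + f_-$ (nonpositive outside $|x|=2$) and on $f_+ - f_-$ (nonnegative everywhere). Each eigenfunction is to be defined by a ``modular integral'': one integrates a weakly holomorphic modular form $\varphi_{\pm}$ against the Gaussian kernel $e^{\pi i \tau |x|^2}$ along a suitable contour in the upper half plane $\uhp$. The modular transformation $\tau \mapsto -1/\tau$, combined with the theta-function behavior of the Gaussian, implements the Fourier transform and forces $\hat{f_\pm} = \pm f_\pm$ provided $\varphi_\pm$ has the correct weight for dimension $24$. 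A factor of $\sin^2(\pi |x|^2/2)$ built into the construction supplies the required double zeros at $|x|^2 \in \{4, 6, 8, \dots\}$.

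For dimension twenty-four the weights of the relevant forms are shifted from those in the $E_8$ case, but the candidates still live in the finite-dimensional spaces of modular forms for $\SL_2(\Z)$ and $\Gamma_0(2)$, generated by $\Delta$, the Eisenstein series $E_4$, $E_6$, the quasi-modular $E_2$, and their level-two analogues. Normalizing so that the unwanted zeros cancel, so that $f(0) = \hat{f}(0)$, and so that the behavior at the cusps is correct pins down $\varphi_\pm$ uniquely and yields an explicit closed-form candidate for $(f_+, f_-)$.

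The main obstacle, exactly as in the $E_8$ case, is verifying the sign conditions $f_+ + f_- \le 0$ on $|x| \ge 2$ and $f_+ - f_- \ge 0$ everywhere. Both functions are given by explicit but intricate contour integrals whose signs are by no means evident from the formulas. In dimension twenty-four this requires additional care: the relevant $q$-expansions are heavier and the slack in the inequalities is narrower than for $E_8$. The strategy is to split each integrand into a ``principal'' piece whose sign is visible from its $q$-expansion plus a rapidly decaying remainder, estimate the remainder by rigorous tail bounds, and finally certify the pointwise inequalities through a combination of analytic argument and rigorous interval arithmetic, as carried out in \cite{CKMRV2017}.
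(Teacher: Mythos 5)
Your proposal follows essentially the same route the paper itself takes for this theorem (which it does not prove in detail but attributes to \cite{CKMRV2017}): apply the Cohn--Elkies bound of Theorem~\ref{thm:LPbound} with $r=2$, note that the unimodular Leech lattice with minimal norm $4$ attains $\pi^{12}/12!$, and build the magic function as $f_+ + f_-$ with $\widehat{f_\pm} = \pm f_\pm$ via Viazovska-type contour/Laplace integrals of weakly holomorphic (quasi)modular forms, finishing with a sign verification partly by interval arithmetic. One small correction to your sharpness heuristic: $f$ itself needs only a simple root (a sign change) at radius $2$, with double roots at $\sqrt{2n}$ for $n \ge 3$ (it is $\widehat{f}$ that has double roots at all $\sqrt{2n}$, $n \ge 2$), and correspondingly the zeros of the $\sin^2(\pi|x|^2/2)$ prefactor at $|x|^2 = 0$ and $2$ must be cancelled, and the one at $|x|^2=4$ reduced to first order, by the poles arising from the exponentially growing terms of the integrand after analytic continuation---exactly the mechanism the paper describes in the eight-dimensional case.
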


As Peter Sarnak said at the time \cite{K2016}, her paper \cite{V2017} is ``stunningly simple, as all great things are.'' This simplicity is characteristic of Viazovska's work: she has a gift for linking concepts and posing bold conjectures, and these insights lead her to striking arguments. Her proofs engage directly with the heart of the matter, without any extraneous complications. Of course, simple is very much not the same thing as easy. What makes her work extraordinary is how different her ideas are from what came before.

In the remainder of this article, we will examine Viazovska's proof of the optimality of $E_8$, as well as its motivation and place in mathematics more broadly. In particular, this article can serve as an introduction and guide to Viazovska's techniques, alongside other expositions \cite{dLV2016, C2017}. For background on sphere packing and lattices, see \cite{CS1999,E2013,T1983}.

Of course we should keep in mind that this topic represents only one strand of Viazovska's research. For example, \cite{BRV2013} is a beautiful and decisive paper on a quite different topic.
What will she be known for in twenty or thirty years? I look forward to finding out.

\section{The past}

Before we turn to Viazovska's proof, we will need some background. In this section, we will construct the $E_8$ lattice and explain a method for proving upper bounds for the sphere packing density.

Sphere packings can be constructed in many ways, among which lattice packings are the simplest possibility.
A \emph{lattice packing} of spheres centers the spheres at the points of a \emph{lattice} $\Lambda$ in $\R^d$, i.e., a discrete subgroup of $\R^d$ of rank~$d$, or equivalently the integral span of a basis of $\R^d$. There is no reason why an optimal sphere packing should have this algebraic structure, and for example the best sphere packing known in $\R^{10}$ does not. However, many of the best sphere packings known in low dimensions are lattice packings.

To form a packing from a lattice $\Lambda$, we must choose the sphere radius $r$ so that neighboring spheres do not overlap. Specifically, we should take
\[
r = \tfrac{1}{2} \min_{x \in \Lambda \setminus \{0\}} |x|.
\]
The volume of a sphere of radius $r$ in $\R^d$ is $\pi^{d/2} r^n/(d/2)!$, where $(d/2)!$ means $\Gamma(d/2+1)$ when $d$ is odd, and the \emph{density} of the overall packing (i.e., the fraction of space covered by the balls) is the sphere volume times the number of spheres per unit volume in space. Let $\vol{\R^d/\Lambda}$ denote the \emph{covolume} of the lattice, i.e., the volume of the quotient torus, or equivalently the absolute value of the determinant of a lattice basis. Then the number of spheres per unit volume in space is $1/ \vol{\R^d/\Lambda}$, and so the lattice packing density is 
\[
\frac{\pi^{d/2} r^n}{(d/2)!  \vol{\R^d/\Lambda}}.
\]

One of the most remarkable lattices is the \emph{$E_8$ root lattice}, which originated in Lie theory but has since become widespread across mathematics. We will see below how to obtain $E_8$ as a modification of the
$D_d$ lattice, the checkerboard lattice in $d$ dimensions, which is defined by
\[
D_d = \{(x_1,\dots,x_d) \in \Z^d : \text{$x_1+\dots+x_d$ is even}\}.
\]
In other words, $D_d$ simply omits every other point in the cubic lattice $\Z^d$. As a special case, $D_3$ is the face-centered cubic lattice in three dimensions, which Hales showed achieves the optimal sphere packing density \cite{H2005}, and $D_4$ and $D_5$ are the best packings known in their dimensions. However, $D_d$ is not optimal beyond five dimensions.

The problem with $D_d$ in higher dimensions is that its holes are too large. A \emph{hole} is a point in space that is a local maximum for distance from the lattice. There are two types of holes in $D_d$, shallow holes at distance~$1$ from the lattice, such as $(1,0,\dots,0)$, and deep holes at distance $\sqrt{d/4}$ from the lattice, such as $(\tfrac{1}{2},\tfrac{1}{2},\dots,\tfrac{1}{2})$. As $d \to \infty$, so does $\sqrt{d/4}$, and so the deep holes become large enough to fit enormous numbers of additional spheres. In particular, $D_d$ cannot be optimal when $d$ is large.

When $d=8$, something beautiful happens. The distance $\sqrt{8/4}$ from a deep hole to the lattice exactly equals the distance $\sqrt{2}$ between lattice points in $D_8$, and that means the deep holes are just large enough to be filled with additional spheres. If we plug these holes with spheres, then the resulting packing is the union of $D_8$ with its translate $D_8 + (\tfrac{1}{2},\tfrac{1}{2},\dots,\tfrac{1}{2})$. It is not hard to check that this packing is a lattice (it amounts to the fact that ${2 \cdot (\tfrac{1}{2},\tfrac{1}{2},\dots,\tfrac{1}{2})} \in D_8$), which is called the \emph{$E_8$ root lattice}.

The $E_8$ lattice packing has packing radius $r = \sqrt{2}/2$ and covolume $\vol{\R^8/E_8} = \vol{\R^8/D_8}/2 = 1$, and so it has a packing density of $\pi^4/384 = 0.2536\dots$. It is by no means obvious that this construction is optimal. In fact, the construction feels a little ad hoc. However, the $E_8$ lattice turns out to be far more beautiful and symmetric than its construction indicates. For example, see Figure~\ref{figure:E8} for a view of $E_8$ with $30$-fold symmetry.
This is a common pattern with exceptional structures in mathematics: they are typically obtained by piecing together several substructures that each have less symmetry individually.

\begin{figure}[t]
\begin{center}
\includegraphics[width=3in,height=3in]{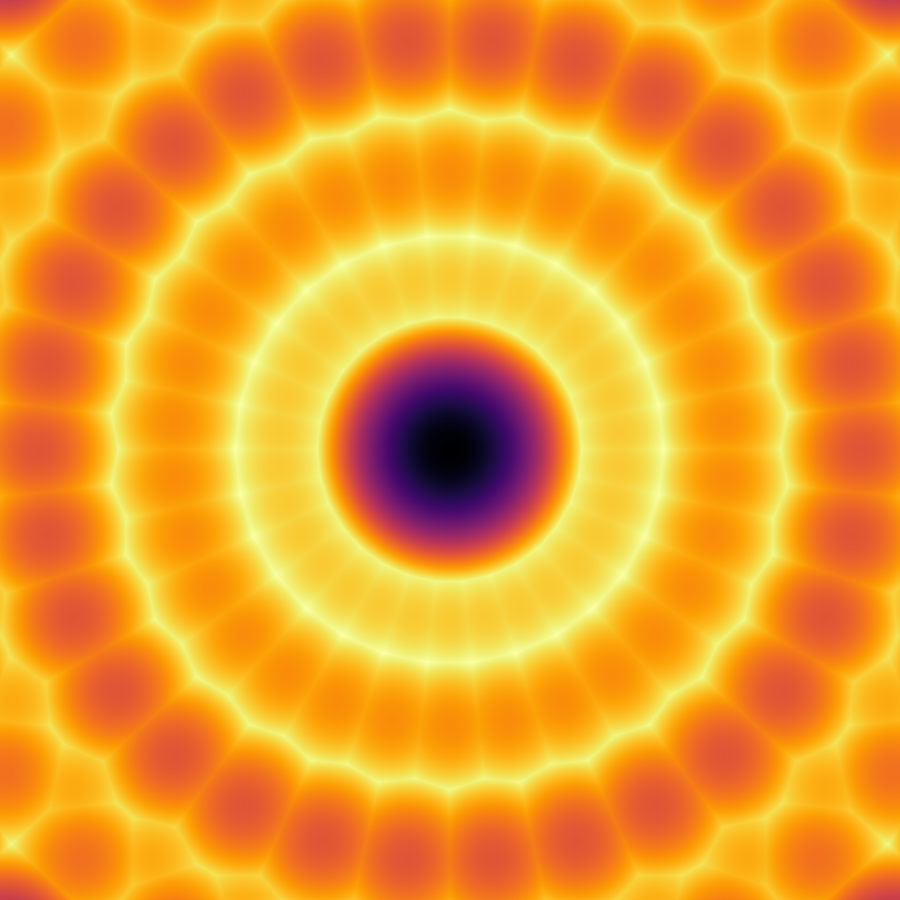}
\end{center}
\caption{\\A two-dimensional cross section of $\R^8$ through a Coxeter plane of $E_8$, colored according to the squared distance to the nearest point in $E_8$ (dark is close) and inspired by \cite{M2017}.}
\label{figure:E8}
\end{figure}

Now that we have the $E_8$ lattice, the next question is how we could try to obtain a matching upper bound for the sphere packing density in eight dimensions. Obtaining a matching bound seems completely infeasible in most dimensions, but in a few special dimensions bounds based on harmonic analysis work remarkably well. This idea, called the \emph{linear programming bound}, goes back to a fundamental paper by Delsarte \cite{D1972} on error-correcting codes, and the corresponding bound for sphere packings was developed by Cohn and Elkies \cite{CE2003}.

The linear programming bound is formulated in terms of the \emph{Fourier transform} $\widehat{f}$ of an integrable function $f \colon \R^d \to \CC$, which we will normalize as
\[
\widehat{f}(y) = \int_{\R^d} f(x) e^{-2\pi i \langle x,y \rangle} \, dx,
\]
where $\langle \cdot,\cdot \rangle$ is the usual inner product on $\R^d$. Recall that the Fourier transform decomposes $f$ into complex exponentials; in signal processing terms, it amounts to identifying the frequencies that occur in a signal and their relative magnitudes. This decomposition amounts to the \emph{Fourier inversion theorem}: if $\widehat{f}$ is integrable as well, then
\[
f(x) = \int_{\R^d} \widehat{f}(y) e^{2\pi i \langle x,y \rangle} \, dy. 
\]
In other words, the Fourier transform is very nearly its own inverse, with a single sign change being the only difference. Note that $\widehat{f}$ is generally complex-valued, even if $f$ is real-valued, but $\widehat{f}$ is real-valued if $f$ is real-valued and an even function.

We will also need a few types of well-behaved functions. A function $f \colon \R^d \to \R$ is called \emph{rapidly decreasing} if $f(x) = O\big(|x|^{-c}\big)$ as $|x| \to \infty$ for every constant $c>0$, and a \emph{Schwartz function} is a smooth function such that it and all its iterated partial derivatives (of every order) are rapidly decreasing. Schwartz functions are arguably the best-behaved functions in harmonic analysis. Much of what we will discuss can be generalized somewhat beyond Schwartz functions, but they are all Viazovska needed to solve the sphere packing problem.

We can now state the linear programming bound for sphere packing:

\begin{theorem}[Cohn and Elkies \cite{CE2003}] 
\label{thm:LPbound}
Let $f \colon \R^d \to \R$ be an even Schwartz function and $r$ a positive real number. If 
\begin{enumerate}
\item $f(x) \le 0$ for all $x \in \R^d$ satisfying $|x| \ge r$, 
\item $\widehat{f}(y) \ge 0$ for all $y \in \R^d$, and
\item $f(0)=\widehat{f}(0)=1$,
\end{enumerate}
then the optimal sphere packing density in $\R^d$ is at most $\vol{B_{r/2}^d} = \pi^{d/2} (r/2)^d/(d/2)!$.
\end{theorem}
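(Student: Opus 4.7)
The plan is to exploit the three hypotheses through the Poisson summation formula. First I would reduce to periodic packings—those consisting of finitely many translates of a single lattice $\Lambda$, say with centers $\bigcup_{j=1}^{N} (\Lambda + v_j)$ where the $v_j$ are distinct modulo $\Lambda$—since a standard approximation argument shows that periodic packings achieve densities arbitrarily close to the optimum. Such a packing with minimum center distance $\ge r$ has density $N \vol{B_{r/2}^d}/\vol{\R^d/\Lambda}$, so the desired bound reduces to showing $N \le \vol{\R^d/\Lambda}$.

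As a warm-up, consider the lattice case $N = 1$. Poisson summation gives
\[
\sum_{x \in \Lambda} f(x) = \frac{1}{\vol{\R^d/\Lambda}} \sum_{y \in \Lambda^*} \widehat{f}(y),
\]
where $\Lambda^*$ is the dual lattice. Hypothesis~(1) forces $f(x) \le 0$ for every nonzero $x \in \Lambda$, so the left side is at most $f(0) = 1$; hypothesis~(2) makes every term on the right nonnegative, so the right side is at least $\widehat{f}(0)/\vol{\R^d/\Lambda} = 1/\vol{\R^d/\Lambda}$. Thus $\vol{\R^d/\Lambda} \ge 1$. For general $N$, I would apply the same idea to the auto-correlation sum
\[
S = \sum_{1 \le j,k \le N} \; \sum_{x \in \Lambda} f(x + v_j - v_k).
\]
Each term with $x + v_j \ne v_k$ corresponds to a difference of two distinct packing centers, has norm $\ge r$, and so is $\le 0$ by hypothesis~(1); the remaining $N$ terms (one for each $j = k$ with $x = 0$) contribute $f(0) = 1$ each, giving $S \le N$. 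On the other hand, applying the twisted Poisson summation formula to each inner sum rewrites
\[
S = \frac{1}{\vol{\R^d/\Lambda}} \sum_{y \in \Lambda^*} \widehat{f}(y) \,\bigl|\textstyle\sum_{j=1}^{N} e^{2\pi i \langle v_j, y\rangle}\bigr|^2,
\]
which is at least $N^2/\vol{\R^d/\Lambda}$ by isolating the $y = 0$ term and using hypothesis~(2) on the rest. Comparing the two estimates yields $N^2/\vol{\R^d/\Lambda} \le N$, as required.

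The essentially analytic inputs—absolute convergence of Poisson summation for Schwartz $f$, and the approximation of an arbitrary packing by periodic ones—are standard and in no way the main point. The genuine insight is the emergence of the perfect square $\bigl|\sum_j e^{2\pi i \langle v_j, y\rangle}\bigr|^2$ on the Fourier side, which is exactly what converts the pointwise nonnegativity of $\widehat{f}$ into a nontrivial constraint. I expect the only subtlety worth spelling out is the bookkeeping on the spatial side: in a well-formed periodic packing the pairs $(j, k, x)$ with $x + v_j = v_k$ are precisely those with $j = k$ and $x = 0$, which is why the ``diagonal'' contribution is exactly $N$.
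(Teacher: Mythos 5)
Your proposal is correct, and its core is exactly the paper's argument: Poisson summation over a lattice, discarding the nonpositive terms $f(x)$ for $x \ne 0$ on one side and the nonnegative terms $\widehat{f}(y)$ for $y \ne 0$ on the other. The difference is one of scope. The paper deliberately proves only the special case of lattice packings (your warm-up $N=1$), remarking that the general theorem follows ``by combining the same technique with a little additional algebra.'' Your proposal supplies that algebra: the reduction to periodic packings, the auto-correlation sum $S = \sum_{j,k}\sum_{x\in\Lambda} f(x+v_j-v_k)$, and the twisted Poisson summation producing the factor $\bigl|\sum_j e^{2\pi i \langle v_j, y\rangle}\bigr|^2$, whose nonnegativity is what lets hypothesis (2) bite, leading to $N^2/\vol{\R^d/\Lambda} \le S \le N$. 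This is in fact the standard Cohn--Elkies proof of the full statement, and your bookkeeping is right: after rescaling so the packing radius is $r/2$, every difference of distinct centers has norm at least $r$, and since the $v_j$ are distinct modulo $\Lambda$ the diagonal terms are exactly the $N$ triples $(j,j,0)$. So you have proved the theorem as stated, whereas the paper's displayed proof establishes only the lattice case; what the paper's choice buys is brevity and a cleaner view of the sharpness analysis (which terms must vanish for $E_8$), which is all the article needs for its later discussion of the magic function.
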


\begin{figure}[t]
\begin{center}
\begin{tikzpicture}
\begin{axis}[every x tick/.style={black}, every y tick/.style={black},
scale only axis,
ymode = log,
xmin=0, xmax=32, ymin=0.00001, ymax=1,
xlabel=\textup{Dimension $d$},
ylabel=\textup{Sphere packing density in $\mathbb{R}^d$},
domain = 0:36,
enlargelimits = false,
xtick = {0,4,...,36},
extra x ticks = {1,...,36},
extra x tick labels={},
yticklabels={$10^{-6}$, $10^{-5}$, $10^{-4}$, $10^{-3}$, $10^{-2}$, $10^{-1}$, $1$},
legend style={at={(0.6,0.3)},anchor=north east},
width=4in,
height=3in
]
\addplot +[mark=none, densely dashed, color=black] plot coordinates {
(1,1.000000000000000)
(2,0.9068996821171089)
(3, 0.779746762 )
(4, 0.647704966 )
(5, 0.524980022 )
(6, 0.417673416 )
(7, 0.327455611  )
(8, 0.253669508 )
(9, 0.194555339  )
(10, 0.147953479 )
(11, 0.111690766  )
(12, 0.083775831 )
(13,0.06248170014568450)
(14,0.04636448923374530)
(15,0.03424826203368300)
(16,0.02519413072133100)
(17,0.01846409033506490)
(18,0.01348534044508620)
(19,0.009817955139543800)
(20,0.007127053603376300)
(21,0.005159660394817600)
(22,0.003725941968920600)
(23,0.002684279886429100)
(24,0.001929574309403923)
(25,0.001384190722285700)
(26,0.0009910238892216000)
(27,0.0007082297958617000)
(28,0.0005052542161057000)
(29,0.0003598581852089000)
(30,0.0002559028743732000)
(31,0.0001817083813917000)
(32,0.0001288432887595000)
};
\addplot +[mark=none, mark options={fill=black,draw=black},color=black] plot
coordinates {
(1,1.000000000000000)
(2,0.9068996821171089)
(3,0.7404804896930610)
(4,0.6168502750680849)
(5,0.4652576133092586)
(6,0.3729475455820649)
(7,0.2952978731457126)
(8,0.2536695079010480)
(9,0.1457748758081711)
(10,0.09961578280770881)
(11,0.06623802700980118)
(12,0.04945417662424406)
(13,0.03201429216034980)
(14,0.02162409608244711)
(15,0.01685757065676270)
(16,0.01470816439743083)
(17,0.008811319182321190)
(18,0.006167898125331257)
(19,0.004120806279768668)
(20,0.003394581410712645)
(21,0.002465884711502463)
(22,0.002451034044121183)
(23,0.001905328193426062)
(24,0.001929574309403923)
(25,0.0006772120097731805)
(26,0.0002692200504338089)
(27,0.0001575943907278669)
(28,0.0001046381049248457)
(29,0.00003414464690742249)
(30,0.00002191535344783022)
(31,0.00001183776518593385)
(32,0.00001104074930885985)
};      
\legend{\quad Linear programming bound, \quad Sphere packing density}
\end{axis}
\end{tikzpicture}
\end{center}
\caption{\\A plot of the numerically computed linear programming bound \cite{ACHLT2020} and the best sphere packing density currently known \cite{CS1999}.}
\label{figure:plot}
\end{figure}

This theorem produces an upper bound for the packing density from a function $f$ satisfying certain inequalities, but it says nothing about how to choose $f$ to optimize the bound. Numerical optimization can produce good choices for $f$, which yield the bounds shown in Figure~\ref{figure:plot}. These bounds are rigorous, but it is possible that other functions may produce even better bounds.

As one can see in Figure~\ref{figure:plot}, the bounds in eight and twenty-four dimensions appear sharp. Numerical optimization will not yield an exactly sharp bound, but it seems to come as close as desired.
Based on data of this sort as well as analogies with other problems in coding theory, Cohn and Elkies conjectured the existence of \emph{magic functions} $f$ that would solve the sphere packing problem exactly in $\R^8$ and $\R^{24}$, by achieving $r=\sqrt{2}$ and $r=2$, respectively. Note that this is not because the bound dips lower in these dimensions, but rather because the optimal packings rise up to meet it. No other dimensions greater than~$2$ seem to have a sharp linear programming bound, and it seems unlikely that others exist, but no proof is known, and the bound has been exactly optimized only for $d=1$, $8$, and $24$.

The heart of Viazovska's breakthrough lies in the construction of the magic functions.
What should $f$ look like if we are to obtain a sharp bound? There are some simple criteria, which we can obtain from the proof of Theorem~\ref{thm:LPbound}. In this article we will examine a proof for just the special case of lattices, but the theorem can be proved in full generality by combining the same technique with a little additional algebra. The argument is based on the \emph{Poisson summation formula}, which says that if $f \colon \R^d \to \CC$ is a Schwartz function, $\Lambda$ is a lattice in $\R^d$, and $\Lambda^*$ is its \emph{dual lattice} (i.e., the lattice generated by the dual basis of any basis of $\Lambda$ with respect to the inner product $\langle \cdot, \cdot \rangle$), then
\[
\sum_{x \in \Lambda} f(x) = \frac{1}{\vol{\R^d/\Lambda}} \sum_{y \in \Lambda^*} \widehat{f}(y).
\]

\begin{proof}[Proof of Theorem~\ref{thm:LPbound} for lattice packings]
The sphere packing problem is scaling-invariant, and so we can use spheres of radius $r/2$. Let $\Lambda$ be any lattice packing with packing radius $r/2$, which means $|x| \ge r$ for $x \in \Lambda \setminus \{0\}$.
If $f$ satisfies the hypotheses of Theorem~\ref{thm:LPbound}, then $f(x) \le 0$ for $x \in \Lambda \setminus \{0\}$ and $\widehat{f}(y) \ge 0$ for all $y$, from which it follows that 
\[
1 = f(0) \ge \sum_{x \in \Lambda} f(x) = \frac{1}{\vol{\R^d/\Lambda}} \sum_{y \in \Lambda^*} \widehat{f}(y) \ge \frac{\widehat{f}(0)}{ \vol{\R^d/\Lambda}} = \frac{1}{ \vol{\R^d/\Lambda}}.
\]
Therefore the packing density $\vol{B_{r/2}^d}/\vol{\R^d/\Lambda}$ is bounded above by $\vol{B_{r/2}^d}$, as desired.
\end{proof}

A first observation is that we can assume without loss of generality that $f$ is radial, i.e., $f(x)$ depends only on $|x|$. 
This reason is that we can replace $f$ with the average of its rotations about the origin, because all the constraints are linear and rotation-invariant. One might wonder whether non-radial functions could be helpful conceptually even if they are not needed, but so far the answer appears to be no. Instead, Viazovska's work turns out to lead to a wonderful new theory of interpolation for radial functions. We will henceforth assume $f$ is radial, and when $t \in [0,\infty)$ we will write $f(t)$ for the common value $f(x)$ with $|x|=t$, as well as $f'(t)$ for the radial derivative.

Now if we examine the central inequality in the proof of Theorem~\ref{thm:LPbound} for lattices, we can see when it could be sharp. To obtain a sharp bound, all of the discarded terms in the inequality must vanish: we must have $f(x) = 0$ for $x \in \Lambda\setminus\{0\}$ and $\widehat{f}(y)=0$ for $y \in \Lambda^*\setminus\{0\}$. In other words, $f$ must vanish on the nonzero distances between lattice points, and $\widehat{f}$ must vanish on the nonzero distances between dual lattice points.

One can check directly from the construction of $E_8$ given above that $E_8^*=E_8$ and that the vector lengths in $E_8$ are all square roots of even integers. Furthermore, it turns out that each distance $\sqrt{2n}$ with $n \ge 0$ actually occurs in $E_8$. We should therefore have $r = \sqrt{2}$ in Theorem~\ref{thm:LPbound}, and the magic function $f$ should have a sign change at radius $\sqrt{2}$, followed by double roots at $\sqrt{2n}$ for $n \ge 2$, as indicated in Figure~\ref{figure:diagram}. In other words, we  wish to control the behavior of $f$ and $\widehat{f}$ to second order at these points, i.e., control both the values $f\big(\sqrt{2n}\big)$ and $\widehat{f}\big(\sqrt{2n}\big)$ and the radial derivatives $f'\big(\sqrt{2n}\big)$ and $\widehat{f}\,'\big(\sqrt{2n}\big)$.

\begin{figure}[t]
\begin{center}
\begin{tikzpicture}[scale=1.2]
\draw[black!50] (0,-0.75) -- (0,1.5);
\draw[black!50] (-0.1/1.2,1.5) -- (0.1/1.2,1.5);
\draw[black!50] (0,0) -- (4.5,0);
\draw (2.25,1) node {$f$};
\draw[black!50] (1,-0.1/1.2) -- (1,0.1/1.2); \draw (0.75,-0.1/1.2) node[below] {$\sqrt{2}$};
\draw[black!50] (2,-0.1/1.2) -- (2,0.1/1.2); \draw (2,-0.1/1.2) node[below] {$\sqrt{4}$};
\draw[black!50] (3,-0.1/1.2) -- (3,0.1/1.2); \draw (3,-0.1/1.2) node[below] {$\sqrt{6}$};
\draw[black!50] (4,-0.1/1.2) -- (4,0.1/1.2); \draw (4,-0.1/1.2) node[below] {$\sqrt{8}$};
\draw (0,1.5) to[out=0,in=106] (1,0) to[out=286,in=180] (1.4,-0.5)
to[out=0,in=180] (2,0) to[out=0,in=180] (2.5,-0.25)
to[out=0,in=180] (3,0) to[out=0,in=180] (3.5,-0.125)
to[out=0,in=180] (4,0) to[out=0,in=180] (4.5,-0.0625);
\end{tikzpicture}
\hskip 1cm
\begin{tikzpicture}[scale=1.2]
\draw[black!50] (0,-0.75) -- (0,1.5);
\draw[black!50] (-0.1/1.2,1.5) -- (0.1/1.2,1.5);
\draw[black!50] (0,0) -- (4.5,0);
\draw (2.25,1) node {$\widehat{f}$};
\draw[black!50] (1,-0.1/1.2) -- (1,0.1/1.2); \draw (0.75,-0.1/1.2) node[below] {$\sqrt{2}$};
\draw[black!50] (2,-0.1/1.2) -- (2,0.1/1.2); \draw (2,-0.1/1.2) node[below] {$\sqrt{4}$};
\draw[black!50] (3,-0.1/1.2) -- (3,0.1/1.2); \draw (3,-0.1/1.2) node[below] {$\sqrt{6}$};
\draw[black!50] (4,-0.1/1.2) -- (4,0.1/1.2); \draw (4,-0.1/1.2) node[below] {$\sqrt{8}$};
\draw (0,1.5) to[out=0,in=180] (1,0) to[out=0,in=180] (1.5,0.4)
to[out=0,in=180] (2,0) to[out=0,in=180] (2.5,0.2)
to[out=0,in=180] (3,0) to[out=0,in=180] (3.5,0.1)
to[out=0,in=180] (4,0) to[out=0,in=180] (4.5,0.05);
\end{tikzpicture}
\end{center}
\caption{\\This schematic diagram, which is taken from \cite{C2017}, shows the roots of the magic
function $f$ and its Fourier transform $\widehat{f}$ in eight dimensions.
It is not a plot of the actual function, which decreases very rapidly. See Figure~\ref{figure:magicf}
for an actual plot.}
\label{figure:diagram}
\end{figure}

\begin{figure}[t]
\begin{center}
\begin{tikzpicture}[scale=0.9]
\draw[black!50] (0,0) -- (6,0);
\draw[black!50] (0,-1.27) -- (0,2);
\draw[black!50] (-0.1/0.9,2) -- (0.1/0.9,2);
\draw[black!50] (2,-0.1/0.9) -- (2,0.1/0.9);
\draw[black!50] (4,-0.1/0.9) -- (4,0.1/0.9);
\draw[black!50] (6,-0.1/0.9) -- (6,0.1/0.9);
\draw (3,-1.5) node {$x \mapsto f(x)$};
\draw (0,2)--(3/100,1.9987854)--(3/50,1.9951468)--(9/100,1.9890995)--(3/25,1.9806688)--(3/20,1.9698899)--(9/50,1.9568075)--(21/100,1.9414755)--(6/25,1.9239564)--(27/100,1.9043209)--(3/10,1.8826473)--(33/100,1.8590207)--(9/25,1.8335326)--(39/100,1.8062799)--(21/50,1.7773645)--(9/20,1.7468922)--(12/25,1.7149721)--(51/100,1.6817159)--(27/50,1.6472371)--(57/100,1.6116502)--(3/5,1.5750702)--(63/100,1.5376115)--(33/50,1.4993879)--(69/100,1.4605116)--(18/25,1.4210927)--(3/4,1.3812388)--(39/50,1.3410544)--(81/100,1.3006407)--(21/25,1.2600952)--(87/100,1.2195115)--(9/10,1.1789786)--(93/100,1.1385815)--(24/25,1.0984003)--(99/100,1.0585106)--(51/50,1.0189833)--(21/20,0.97988437)--(27/25,0.94127527)--(111/100,0.90321264)--(57/50,0.86574856)--(117/100,0.82893060)--(6/5,0.79280192)--(123/100,0.75740145)--(63/50,0.72276398)--(129/100,0.68892040)--(33/25,0.65589777)--(27/20,0.62371958)--(69/50,0.59240586)--(141/100,0.56197341)--(36/25,0.53243593)--(147/100,0.50380422)--(3/2,0.47608634)--(153/100,0.44928777)--(39/25,0.42341159)--(159/100,0.39845858)--(81/50,0.37442741)--(33/20,0.35131478)--(42/25,0.32911550)--(171/100,0.30782266)--(87/50,0.28742772)--(177/100,0.26792062)--(9/5,0.24928990)--(183/100,0.23152274)--(93/50,0.21460511)--(189/100,0.19852184)--(48/25,0.18325668)--(39/20,0.16879235)--(99/50,0.15511070)--(201/100,0.14219270)--(51/25,0.13001851)--(207/100,0.11856761)--(21/10,0.10781880)--(213/100,0.097750275)--(54/25,0.088339728)--(219/100,0.079564370)--(111/50,0.071401015)--(9/4,0.063826138)--(57/25,0.056815946)--(231/100,0.050346437)--(117/50,0.044393476)--(237/100,0.038932852)--(12/5,0.033940355)--(243/100,0.029391835)--(123/50,0.025263281)--(249/100,0.021530878)--(63/25,0.018171081)--(51/20,0.015160679)--(129/50,0.012476859)--(261/100,0.010097268)--(66/25,0.0080000781)--(267/100,0.0061640369)--(27/10,0.0045685276)--(273/100,0.0031936175)--(69/25,0.0020201048)--(279/100,0.0010295609)--(141/50,0.00020436815)--(57/20,-0.00047224819)--(72/25,-0.0010161928)--(291/100,-0.0014424790)--(147/50,-0.0017652128)--(297/100,-0.0019975833)--(3,-0.0021518586)--(303/100,-0.0022393872)--(153/50,-0.0022706060)--(309/100,-0.0022550526)--(78/25,-0.0022013841)--(63/20,-0.0021174001)--(159/50,-0.0020100706)--(321/100,-0.0018855684)--(81/25,-0.0017493049)--(327/100,-0.0016059697)--(33/10,-0.0014595730)--(333/100,-0.0013134900)--(84/25,-0.0011705076)--(339/100,-0.0010328718)--(171/50,-0.00090233642)--(69/20,-0.00078021143)--(87/25,-0.00066741138)--(351/100,-0.00056450263)--(177/50,-0.00047174953)--(357/100,-0.00038915867)--(18/5,-0.00031652117)--(363/100,-0.00025345244)--(183/50,-0.00019942922)--(369/100,-0.00015382367)--(93/25,-0.00011593435)--(15/4,-8.5013941 E-5)--(189/50,-6.0293648 E-5)--(381/100,-4.1004344 E-5)--(96/25,-2.6394416 E-5)--(387/100,-1.5744464 E-5)--(39/10,-8.3789703 E-6)--(393/100,-3.6751161 E-6)--(99/25,-1.0689616 E-6)--(399/100,-5.9215183 E-8)--(201/50,-2.0885328 E-7)--(81/20,-1.1448593 E-6)--(102/25,-2.5563584 E-6)--(411/100,-4.1914266 E-6)--(207/50,-5.8528487 E-6)--(417/100,-7.3930893 E-6)--(21/5,-8.7087261 E-6)--(423/100,-9.7345770 E-6)--(213/50,-1.0437728 E-5)--(429/100,-1.0811650 E-5)--(108/25,-1.0870551 E-5)--(87/20,-1.0644111 E-5)--(219/50,-1.0172675 E-5)--(441/100,-9.5030083 E-6)--(111/25,-8.6846283 E-6)--(447/100,-7.7667646 E-6)--(9/2,-6.7959269 E-6)--(453/100,-5.8140710 E-6)--(114/25,-4.8573235 E-6)--(459/100,-3.9552170 E-6)--(231/50,-3.1303738 E-6)--(93/20,-2.3985742 E-6)--(117/25,-1.7691368 E-6)--(471/100,-1.2455411 E-6)--(237/50,-8.2622160 E-7)--(477/100,-5.0547012 E-7)--(24/5,-2.7438476 E-7)--(483/100,-1.2181525 E-7)--(243/50,-3.5260181 E-8)--(489/100,-1.6818469 E-9)--(123/25,-8.2130521 E-9)--(99/20,-4.2739253 E-8)--(249/50,-9.4347745 E-8)--(501/100,-1.5364309 E-7)--(126/25,-2.1293451 E-7)--(507/100,-2.6630620 E-7)--(51/10,-3.0958574 E-7)--(513/100,-3.4022826 E-7)--(129/25,-3.5713584 E-7)--(519/100,-3.6043183 E-7)--(261/50,-3.5120910 E-7)--(21/4,-3.3126981 E-7)--(132/25,-3.0287206 E-7)--(531/100,-2.6849599 E-7)--(267/50,-2.3063925 E-7)--(537/100,-1.9164822 E-7)--(27/5,-1.5358895 E-7)--(543/100,-1.1815843 E-7)--(273/50,-8.6634928 E-8)--(549/100,-5.9863775 E-8)--(138/25,-3.8273499 E-8)--(111/20,-2.1916413 E-8)--(279/50,-1.0526968 E-8)--(561/100,-3.5912999 E-9)--(141/25,-4.2165204 E-10)--(567/100,-2.3005869 E-10)--(57/10,-2.1965768 E-9)--(573/100,-5.5284309 E-9)--(144/25,-9.5075829 E-9)--(579/100,-1.3525378 E-8)--(291/50,-1.7103983 E-8)--(117/20,-1.9905249 E-8)--(147/25,-2.1728407 E-8)--(591/100,-2.2498528 E-8)--(297/50,-2.2248035 E-8)--(597/100,-2.1093700 E-8)--(6,-1.9211474 E-8);
\end{tikzpicture}
\hskip 1cm
\begin{tikzpicture}[scale=0.9]
\draw[black!50] (0,0) -- (6,0);
\draw[black!50] (0,-1.27) -- (0,2);
\draw[black!50] (-0.1/0.9,2) -- (0.1/0.9,2);
\draw[black!50] (2,-0.1/0.9) -- (2,0.1/0.9);
\draw[black!50] (4,-0.1/0.9) -- (4,0.1/0.9);
\draw[black!50] (6,-0.1/0.9) -- (6,0.1/0.9);
\draw (3,-1.5) node {$x \mapsto e^{2\pi|x|}|x|^{7/2}f(x)/300$};
\draw (0,0)--(3/100,3.0261860 E-9)--(3/50,3.7552649 E-8)--(9/100,1.7004855 E-7)--(3/25,5.0926654 E-7)--(3/20,1.2153226 E-6)--(9/50,2.5110964 E-6)--(21/100,4.6956125 E-6)--(6/25,8.1594083 E-6)--(27/100,1.3401972 E-5)--(3/10,2.1051353 E-5)--(33/100,3.1886068 E-5)--(9/25,4.6859389 E-5)--(39/100,6.7126146 E-5)--(21/50,9.4072101 E-5)--(9/20,0.00012934601)--(12/25,0.00017489436)--(51/100,0.00023299895)--(27/50,0.00030631712)--(57/100,0.00039792486)--(3/5,0.00051136247)--(63/100,0.00065068291)--(33/50,0.00082050255)--(69/100,0.0010260542)--(18/25,0.0012732418)--(3/4,0.0015686975)--(39/50,0.0019198390)--(81/100,0.0023349281)--(21/25,0.0028231293)--(87/100,0.0033945676)--(9/10,0.0040603847)--(93/100,0.0048327931)--(24/25,0.0057251255)--(99/100,0.0067518805)--(51/50,0.0079287605)--(21/20,0.0092727022)--(27/25,0.010801897)--(111/100,0.012535797)--(57/50,0.014495112)--(117/100,0.016701781)--(6/5,0.019178933)--(123/100,0.021950817)--(63/50,0.025042713)--(129/100,0.028480807)--(33/25,0.032292039)--(27/20,0.036503912)--(69/50,0.041144262)--(141/100,0.046240985)--(36/25,0.051821715)--(147/100,0.057913455)--(3/2,0.064542148)--(153/100,0.071732206)--(39/25,0.079505963)--(159/100,0.087883081)--(81/50,0.096879891)--(33/20,0.10650867)--(42/25,0.11677688)--(171/100,0.12768630)--(87/50,0.13923218)--(177/100,0.15140230)--(9/5,0.16417599)--(183/100,0.17752313)--(93/50,0.19140318)--(189/100,0.20576414)--(48/25,0.22054157)--(39/20,0.23565766)--(99/50,0.25102032)--(201/100,0.26652241)--(51/25,0.28204104)--(207/100,0.29743707)--(21/10,0.31255476)--(213/100,0.32722162)--(54/25,0.34124860)--(219/100,0.35443044)--(111/50,0.36654649)--(9/4,0.37736180)--(57/25,0.38662867)--(231/100,0.39408855)--(117/50,0.39947452)--(237/100,0.40251415)--(12/5,0.40293290)--(243/100,0.40045799)--(123/50,0.39482280)--(249/100,0.38577168)--(63/25,0.37306528)--(51/20,0.35648623)--(129/50,0.33584508)--(261/100,0.31098663)--(66/25,0.28179631)--(267/100,0.24820657)--(27/10,0.21020321)--(273/100,0.16783146)--(69/25,0.12120155)--(279/100,0.070493738)--(141/50,0.015962473)--(57/20,-0.042060427)--(72/25,-0.10316384)--(291/100,-0.16685680)--(147/50,-0.23256961)--(297/100,-0.29965671)--(3,-0.36740151)--(303/100,-0.43502329)--(153/50,-0.50168622)--(309/100,-0.56651061)--(78/25,-0.62858627)--(63/20,-0.68698799)--(159/50,-0.74079289)--(321/100,-0.78909948)--(81/25,-0.83104815)--(327/100,-0.86584254)--(33/10,-0.89277155)--(333/100,-0.91123129)--(84/25,-0.92074639)--(339/100,-0.92099012)--(171/50,-0.91180243)--(69/20,-0.89320537)--(87/25,-0.86541502)--(351/100,-0.82884943)--(177/50,-0.78413158)--(357/100,-0.73208719)--(18/5,-0.67373662)--(363/100,-0.61028064)--(183/50,-0.54307999)--(369/100,-0.47362864)--(93/25,-0.40352108)--(15/4,-0.33441417)--(189/50,-0.26798423)--(381/100,-0.20588038)--(96/25,-0.14967536)--(387/100,-0.10081532)--(39/10,-0.060570103)--(393/100,-0.029985950)--(99/25,-0.0098423630)--(399/100,-0.00061513951)--(201/50,-0.0024473675)--(81/20,-0.015130131)--(102/25,-0.038094441)--(411/100,-0.070415589)--(207/50,-0.11083077)--(417/100,-0.15777032)--(21/5,-0.20940237)--(423/100,-0.26369027)--(213/50,-0.31846126)--(429/100,-0.37148460)--(108/25,-0.42055645)--(87/20,-0.46358853)--(219/50,-0.49869701)--(441/100,-0.52428768)--(111/25,-0.53913360)--(447/100,-0.54244104)--(9/2,-0.53389991)--(453/100,-0.51371551)--(114/25,-0.48261844)--(459/100,-0.44185110)--(231/50,-0.39312949)--(93/20,-0.33858077)--(117/25,-0.28065773)--(471/100,-0.22203305)--(237/50,-0.16547730)--(477/100,-0.11372574)--(24/5,-0.069340119)--(483/100,-0.034572292)--(243/50,-0.011237102)--(489/100,-0.00060178533)--(123/25,-0.0032990421)--(99/20,-0.019270006)--(249/50,-0.047742178)--(501/100,-0.087245768)--(126/25,-0.13566990)--(507/100,-0.19035795)--(51/10,-0.24823884)--(513/100,-0.30598886)--(129/25,-0.36021611)--(519/100,-0.40765796)--(261/50,-0.44538014)--(21/4,-0.47096544)--(132/25,-0.48267949)--(531/100,-0.47960182)--(267/50,-0.46171193)--(537/100,-0.42992191)--(27/5,-0.38605057)--(543/100,-0.33273728)--(273/50,-0.27329767)--(549/100,-0.21152770)--(138/25,-0.15146636)--(111/20,-0.097130907)--(279/50,-0.052241642)--(561/100,-0.019954773)--(141/25,-0.0026229346)--(567/100,-0.0016020167)--(57/10,-0.017120931)--(573/100,-0.048227342)--(144/25,-0.092817608)--(579/100,-0.14775332)--(291/50,-0.20906043)--(117/20,-0.27220028)--(147/25,-0.33239559)--(591/100,-0.38498897)--(297/50,-0.42580771)--(597/100,-0.45150599)--(6,-0.45985610);
\end{tikzpicture}
\end{center}
\caption{\\Two plots of Viazovska's magic function in eight dimensions. The first plot is scaled correctly, but it decreases so rapidly that the roots become invisible. The second plot introduces a rescaling to make them visible, based on the asymptotic decay rate.}
\label{figure:magicf}
\end{figure}

How can one construct such a function $f$? The reason this task is difficult is that it involves controlling both $f$ and $\widehat{f}$ simultaneously. Either one is of course easy on its own, but handling both at once introduces profound difficulties. The underlying issue here is Heisenberg's uncertainty principle: in loose terms, whenever you try to pin down $f$, you lose control over $\widehat{f}$, and vice versa. More precisely, we run into Bourgain, Clozel, and Kahane's uncertainty principle for controlling the signs of functions \cite{BCK2010, CG2019}. These seemingly simple inequalities on $f$ and $\widehat{f}$ therefore turn out to be far more subtle than they initially appear.

When Elkies and I proposed this method in 1999, Viazovska was still in secondary school. Without realizing how profoundly difficult the remaining step was, 
I imagined that we had almost solved the sphere packing problem in eight and twenty-four dimensions, and our inability to find the magic functions was extremely frustrating. At first, I worried that someone else would find an easy solution and leave me feeling foolish for not doing it myself. Over time I became convinced that obtaining these functions was in fact difficult, and others also reached the same conclusion. For example, Thomas Hales has said that
``I felt that it would take a Ramanujan to find it'' \cite{K2016}.
Eventually, instead of worrying that someone else would solve it, I began to fear that nobody would solve it, and that I would someday die without knowing the outcome. I am grateful that Viazovska found such a satisfying and beautiful solution, and that she introduced wonderful new ideas for the mathematical community to explore.

\section{Modular forms}

Viazovska's magic function is constructed using modular forms, certain special functions that play an important role in number theory. The theory of modular forms has a reputation for being somewhat forbidding, but the basics are not so difficult, and that is all that is needed for Viazovska's proof. We will outline the needed theory here. For a down to earth introduction to the case of $\SL_2(\Z)$, see Chapter~VII in \cite{S1973}, and for more detailed and general treatments, see \cite{DS2005,CS2017,Z2008}.

We begin with an example of a modular form, namely Eisenstein series. Recall that the Riemann zeta function is defined by
\[
\zeta(s) = \sum_{n=1}^\infty \frac{1}{n^s}
\]
when this sum converges, i.e., when $\Re(s) > 1$. Here we are summing inverse powers of the arithmetic progression $1,2,\dots$, and Euler obtained an exact formula when $s$ is an even integer.
What if we instead wanted to sum inverse powers of a lattice in the complex plane? Setting aside the question of why we would want to do this (the result has deeper significance than one might guess), we could write the result as the \emph{Eisenstein series}
\begin{equation} \label{eq:eisenstein}
E_k(z) = \frac{1}{2\zeta(k)} 
\sum_{(m,n) \in \Z^2 \setminus \{(0,0)\}} \frac{1}{(mz+n)^k}
\end{equation}
for $\Im z  > 0$,
where we are summing over the lattice $\{mz+n : m,n \in \Z\}$, with the exception of the point $(0,0)$ at which the summand blows up. Up to scaling by a complex factor, all two-dimensional lattices are of this form. 

The factor of $1/(2\zeta(k))$ in the definition is merely a convenient normalizing factor, which plays no essential role in the study of $E_k$. 
Unfortunately, the notation $E_k$ conflicts with our name for the $E_8$ root lattice, but that will not cause any ambiguity in practice.

We will restrict our attention to positive integers $k$, so that $(mz+n)^k$ is single-valued. The series \eqref{eq:eisenstein} converges absolutely when $k \ge 3$, but just conditionally when $k=2$. For odd $k$, the $(m,n)$ and $(-m,-n)$ terms cancel and we obtain $E_k(z)=0$, and so only the even cases are interesting.\footnote{This parity phenomenon is essentially the same as in Euler's formula for the zeta function at even integers, which can be viewed as computing $\sum_{n \in \Z\setminus\{0\}} n^{-k}$ explicitly for all integers $k>1$.} Thus, we will focus on $E_k$ for $k$ even and at least $4$.

What does an Eisenstein series look like? Figure~\ref{figure:E4} is a plot of $E_4$, in which
black is zero, white is infinity, and color indicates complex phase \cite{L-D2021}, with the sharp transitions in color occurring at positive real values.
The fractal structure visible in this plot can be explained using two functional equations: 
\[
E_k(z+1) = E_k(z) \quad\text{and}\quad E_k(-1/z) = z^k E_k(z).
\]
These symmetries follow from rearranging the defining series \eqref{eq:eisenstein} when $k>2$, and they are the central equations in the theory of modular forms.

\begin{figure}[t]
\begin{center}
\includegraphics[width=4.07in,height=2.07in]{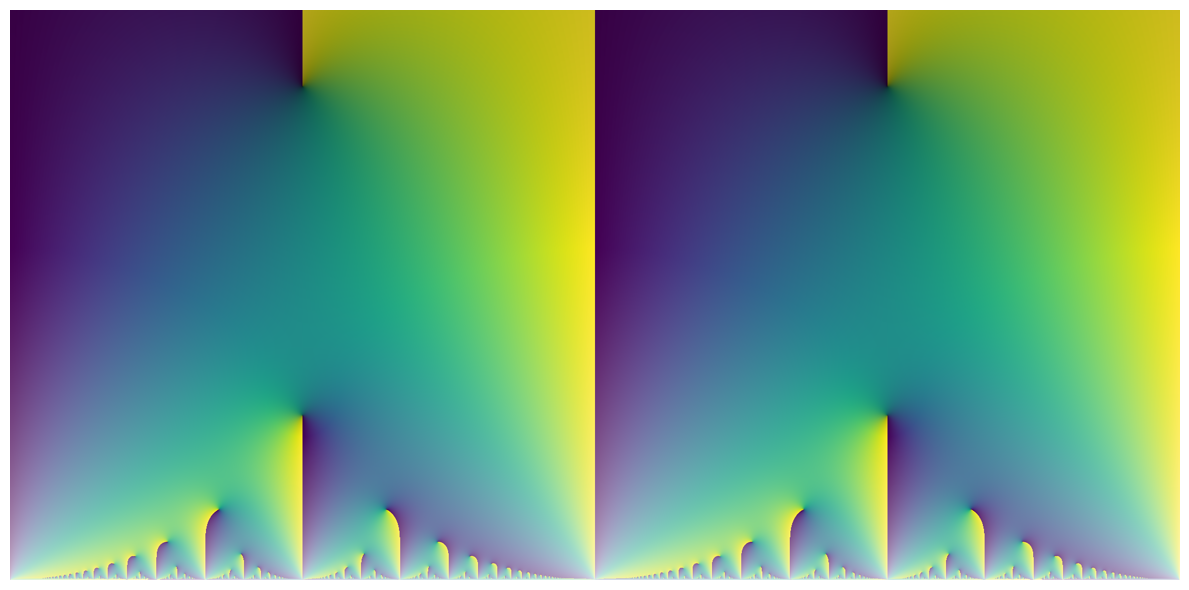}
\end{center}
\begin{center}
\begin{tikzpicture}[x=1in,y=1in]
\draw (-0.001,-0.085) node[anchor=south] {\includegraphics[width=4.067in,height=2.07in]{E4.png}};
\draw (2,0) arc(0:180:2);
\draw (0,0) arc(0:90:2);
\draw (0,0) arc(180:90:2);
\draw (1,0)--(1,2);
\draw (-1,0)--(-1,2);
\draw (-2/3,0) arc(0:180:{2/3}); \draw (0,0) arc(0:180:{2/3}); \draw (4/3,0) arc(0:180:{2/3}); \draw (2,0) arc(0:180:{2/3}); \draw (-6/5,0) arc(0:180:{2/5}); \draw (0,0) arc(0:180:{2/5}); \draw (4/5,0) arc(0:180:{2/5}); \draw (2,0) arc(0:180:{2/5}); \draw (-10/7,0) arc(0:180:{2/7}); \draw (0,0) arc(0:180:{2/7}); \draw (4/7,0) arc(0:180:{2/7}); \draw (2,0) arc(0:180:{2/7}); \draw (-1,0) arc(0:180:{1/4}); \draw (-1/2,0) arc(0:180:{1/4}); \draw (1,0) arc(0:180:{1/4}); \draw (3/2,0) arc(0:180:{1/4}); \draw (-14/9,0) arc(0:180:{2/9}); \draw (0,0) arc(0:180:{2/9}); \draw (4/9,0) arc(0:180:{2/9}); \draw (2,0) arc(0:180:{2/9}); \draw (-18/11,0) arc(0:180:{2/11}); \draw (0,0) arc(0:180:{2/11}); \draw (4/11,0) arc(0:180:{2/11}); \draw (2,0) arc(0:180:{2/11}); \draw (-22/13,0) arc(0:180:{2/13}); \draw (0,0) arc(0:180:{2/13}); \draw (4/13,0) arc(0:180:{2/13}); \draw (2,0) arc(0:180:{2/13}); \draw (-26/15,0) arc(0:180:{2/15}); \draw (-4/3,0) arc(0:180:{2/15}); \draw (-2/5,0) arc(0:180:{2/15}); \draw (0,0) arc(0:180:{2/15}); \draw (4/15,0) arc(0:180:{2/15}); \draw (2/3,0) arc(0:180:{2/15}); \draw (8/5,0) arc(0:180:{2/15}); \draw (2,0) arc(0:180:{2/15}); \draw (-1,0) arc(0:180:{1/8}); \draw (-3/4,0) arc(0:180:{1/8}); \draw (1,0) arc(0:180:{1/8}); \draw (5/4,0) arc(0:180:{1/8}); \draw (-30/17,0) arc(0:180:{2/17}); \draw (0,0) arc(0:180:{2/17}); \draw (4/17,0) arc(0:180:{2/17}); \draw (2,0) arc(0:180:{2/17}); \draw (-34/19,0) arc(0:180:{2/19}); \draw (0,0) arc(0:180:{2/19}); \draw (4/19,0) arc(0:180:{2/19}); \draw (2,0) arc(0:180:{2/19}); \draw (-38/21,0) arc(0:180:{2/21}); \draw (-8/7,0) arc(0:180:{2/21}); \draw (-2/3,0) arc(0:180:{2/21}); \draw (0,0) arc(0:180:{2/21}); \draw (4/21,0) arc(0:180:{2/21}); \draw (6/7,0) arc(0:180:{2/21}); \draw (4/3,0) arc(0:180:{2/21}); \draw (2,0) arc(0:180:{2/21}); \draw (-42/23,0) arc(0:180:{2/23}); \draw (0,0) arc(0:180:{2/23}); \draw (4/23,0) arc(0:180:{2/23}); \draw (2,0) arc(0:180:{2/23}); \draw (-3/2,0) arc(0:180:{1/12}); \draw (-1,0) arc(0:180:{1/12}); \draw (-5/6,0) arc(0:180:{1/12}); \draw (-1/3,0) arc(0:180:{1/12}); \draw (1/2,0) arc(0:180:{1/12}); \draw (1,0) arc(0:180:{1/12}); \draw (7/6,0) arc(0:180:{1/12}); \draw (5/3,0) arc(0:180:{1/12}); \draw (-46/25,0) arc(0:180:{2/25}); \draw (0,0) arc(0:180:{2/25}); \draw (4/25,0) arc(0:180:{2/25}); \draw (2,0) arc(0:180:{2/25}); \draw (-50/27,0) arc(0:180:{2/27}); \draw (0,0) arc(0:180:{2/27}); \draw (4/27,0) arc(0:180:{2/27}); \draw (2,0) arc(0:180:{2/27}); \draw (-54/29,0) arc(0:180:{2/29}); \draw (0,0) arc(0:180:{2/29}); \draw (4/29,0) arc(0:180:{2/29}); \draw (2,0) arc(0:180:{2/29}); \draw (-58/31,0) arc(0:180:{2/31}); \draw (0,0) arc(0:180:{2/31}); \draw (4/31,0) arc(0:180:{2/31}); \draw (2,0) arc(0:180:{2/31}); \draw (-1,0) arc(0:180:{1/16}); \draw (-7/8,0) arc(0:180:{1/16}); \draw (1,0) arc(0:180:{1/16}); \draw (9/8,0) arc(0:180:{1/16}); \draw (-62/33,0) arc(0:180:{2/33}); \draw (-4/3,0) arc(0:180:{2/33}); \draw (-6/11,0) arc(0:180:{2/33}); \draw (0,0) arc(0:180:{2/33}); \draw (4/33,0) arc(0:180:{2/33}); \draw (2/3,0) arc(0:180:{2/33}); \draw (16/11,0) arc(0:180:{2/33}); \draw (2,0) arc(0:180:{2/33}); \draw (-66/35,0) arc(0:180:{2/35}); \draw (-8/5,0) arc(0:180:{2/35}); \draw (-2/7,0) arc(0:180:{2/35}); \draw (0,0) arc(0:180:{2/35}); \draw (4/35,0) arc(0:180:{2/35}); \draw (2/5,0) arc(0:180:{2/35}); \draw (12/7,0) arc(0:180:{2/35}); \draw (2,0) arc(0:180:{2/35}); \draw (-70/37,0) arc(0:180:{2/37}); \draw (0,0) arc(0:180:{2/37}); \draw (4/37,0) arc(0:180:{2/37}); \draw (2,0) arc(0:180:{2/37}); \draw (-74/39,0) arc(0:180:{2/39}); \draw (-16/13,0) arc(0:180:{2/39}); \draw (-2/3,0) arc(0:180:{2/39}); \draw (0,0) arc(0:180:{2/39}); \draw (4/39,0) arc(0:180:{2/39}); \draw (10/13,0) arc(0:180:{2/39}); \draw (4/3,0) arc(0:180:{2/39}); \draw (2,0) arc(0:180:{2/39}); \draw (-7/5,0) arc(0:180:{1/20}); \draw (-1,0) arc(0:180:{1/20}); \draw (-9/10,0) arc(0:180:{1/20}); \draw (-1/2,0) arc(0:180:{1/20}); \draw (3/5,0) arc(0:180:{1/20}); \draw (1,0) arc(0:180:{1/20}); \draw (11/10,0) arc(0:180:{1/20}); \draw (3/2,0) arc(0:180:{1/20}); \draw (-78/41,0) arc(0:180:{2/41}); \draw (0,0) arc(0:180:{2/41}); \draw (4/41,0) arc(0:180:{2/41}); \draw (2,0) arc(0:180:{2/41}); \draw (-82/43,0) arc(0:180:{2/43}); \draw (0,0) arc(0:180:{2/43}); \draw (4/43,0) arc(0:180:{2/43}); \draw (2,0) arc(0:180:{2/43}); \draw (-86/45,0) arc(0:180:{2/45}); \draw (-10/9,0) arc(0:180:{2/45}); \draw (-4/5,0) arc(0:180:{2/45}); \draw (0,0) arc(0:180:{2/45}); \draw (4/45,0) arc(0:180:{2/45}); \draw (8/9,0) arc(0:180:{2/45}); \draw (6/5,0) arc(0:180:{2/45}); \draw (2,0) arc(0:180:{2/45}); \draw (-90/47,0) arc(0:180:{2/47}); \draw (0,0) arc(0:180:{2/47}); \draw (4/47,0) arc(0:180:{2/47}); \draw (2,0) arc(0:180:{2/47}); \draw (-5/3,0) arc(0:180:{1/24}); \draw (-1,0) arc(0:180:{1/24}); \draw (-11/12,0) arc(0:180:{1/24}); \draw (-1/4,0) arc(0:180:{1/24}); \draw (1/3,0) arc(0:180:{1/24}); \draw (1,0) arc(0:180:{1/24}); \draw (13/12,0) arc(0:180:{1/24}); \draw (7/4,0) arc(0:180:{1/24}); \draw (-94/49,0) arc(0:180:{2/49}); \draw (0,0) arc(0:180:{2/49}); \draw (4/49,0) arc(0:180:{2/49}); \draw (2,0) arc(0:180:{2/49}); \draw (-98/51,0) arc(0:180:{2/51}); \draw (-4/3,0) arc(0:180:{2/51}); \draw (-10/17,0) arc(0:180:{2/51}); \draw (0,0) arc(0:180:{2/51}); \draw (4/51,0) arc(0:180:{2/51}); \draw (2/3,0) arc(0:180:{2/51}); \draw (24/17,0) arc(0:180:{2/51}); \draw (2,0) arc(0:180:{2/51}); \draw (-102/53,0) arc(0:180:{2/53}); \draw (0,0) arc(0:180:{2/53}); \draw (4/53,0) arc(0:180:{2/53}); \draw (2,0) arc(0:180:{2/53}); \draw (-106/55,0) arc(0:180:{2/55}); \draw (-6/5,0) arc(0:180:{2/55}); \draw (-8/11,0) arc(0:180:{2/55}); \draw (0,0) arc(0:180:{2/55}); \draw (4/55,0) arc(0:180:{2/55}); \draw (4/5,0) arc(0:180:{2/55}); \draw (14/11,0) arc(0:180:{2/55}); \draw (2,0) arc(0:180:{2/55}); \draw (-3/2,0) arc(0:180:{1/28}); \draw (-1,0) arc(0:180:{1/28}); \draw (-13/14,0) arc(0:180:{1/28}); \draw (-3/7,0) arc(0:180:{1/28}); \draw (1/2,0) arc(0:180:{1/28}); \draw (1,0) arc(0:180:{1/28}); \draw (15/14,0) arc(0:180:{1/28}); \draw (11/7,0) arc(0:180:{1/28}); \draw (-110/57,0) arc(0:180:{2/57}); \draw (-24/19,0) arc(0:180:{2/57}); \draw (-2/3,0) arc(0:180:{2/57}); \draw (0,0) arc(0:180:{2/57}); \draw (4/57,0) arc(0:180:{2/57}); \draw (14/19,0) arc(0:180:{2/57}); \draw (4/3,0) arc(0:180:{2/57}); \draw (2,0) arc(0:180:{2/57}); \draw (-114/59,0) arc(0:180:{2/59}); \draw (0,0) arc(0:180:{2/59}); \draw (4/59,0) arc(0:180:{2/59}); \draw (2,0) arc(0:180:{2/59}); \draw (-118/61,0) arc(0:180:{2/61}); \draw (0,0) arc(0:180:{2/61}); \draw (4/61,0) arc(0:180:{2/61}); \draw (2,0) arc(0:180:{2/61}); \draw (-122/63,0) arc(0:180:{2/63}); \draw (-12/7,0) arc(0:180:{2/63}); \draw (-2/9,0) arc(0:180:{2/63}); \draw (0,0) arc(0:180:{2/63}); \draw (4/63,0) arc(0:180:{2/63}); \draw (2/7,0) arc(0:180:{2/63}); \draw (16/9,0) arc(0:180:{2/63}); \draw (2,0) arc(0:180:{2/63}); \draw (-1,0) arc(0:180:{1/32}); \draw (-15/16,0) arc(0:180:{1/32}); \draw (1,0) arc(0:180:{1/32}); \draw (17/16,0) arc(0:180:{1/32}); \draw (-126/65,0) arc(0:180:{2/65}); \draw (-20/13,0) arc(0:180:{2/65}); \draw (-2/5,0) arc(0:180:{2/65}); \draw (0,0) arc(0:180:{2/65}); \draw (4/65,0) arc(0:180:{2/65}); \draw (6/13,0) arc(0:180:{2/65}); \draw (8/5,0) arc(0:180:{2/65}); \draw (2,0) arc(0:180:{2/65}); \draw (-130/67,0) arc(0:180:{2/67}); \draw (0,0) arc(0:180:{2/67}); \draw (4/67,0) arc(0:180:{2/67}); \draw (2,0) arc(0:180:{2/67}); \draw (-134/69,0) arc(0:180:{2/69}); \draw (-4/3,0) arc(0:180:{2/69}); \draw (-14/23,0) arc(0:180:{2/69}); \draw (0,0) arc(0:180:{2/69}); \draw (4/69,0) arc(0:180:{2/69}); \draw (2/3,0) arc(0:180:{2/69}); \draw (32/23,0) arc(0:180:{2/69}); \draw (2,0) arc(0:180:{2/69}); \draw (-138/71,0) arc(0:180:{2/71}); \draw (0,0) arc(0:180:{2/71}); \draw (4/71,0) arc(0:180:{2/71}); \draw (2,0) arc(0:180:{2/71}); \draw (-13/9,0) arc(0:180:{1/36}); \draw (-1,0) arc(0:180:{1/36}); \draw (-17/18,0) arc(0:180:{1/36}); \draw (-1/2,0) arc(0:180:{1/36}); \draw (5/9,0) arc(0:180:{1/36}); \draw (1,0) arc(0:180:{1/36}); \draw (19/18,0) arc(0:180:{1/36}); \draw (3/2,0) arc(0:180:{1/36}); \draw (-142/73,0) arc(0:180:{2/73}); \draw (0,0) arc(0:180:{2/73}); \draw (4/73,0) arc(0:180:{2/73}); \draw (2,0) arc(0:180:{2/73}); \draw (-146/75,0) arc(0:180:{2/75}); \draw (-32/25,0) arc(0:180:{2/75}); \draw (-2/3,0) arc(0:180:{2/75}); \draw (0,0) arc(0:180:{2/75}); \draw (4/75,0) arc(0:180:{2/75}); \draw (18/25,0) arc(0:180:{2/75}); \draw (4/3,0) arc(0:180:{2/75}); \draw (2,0) arc(0:180:{2/75}); \draw (-150/77,0) arc(0:180:{2/77}); \draw (-12/11,0) arc(0:180:{2/77}); \draw (-6/7,0) arc(0:180:{2/77}); \draw (0,0) arc(0:180:{2/77}); \draw (4/77,0) arc(0:180:{2/77}); \draw (10/11,0) arc(0:180:{2/77}); \draw (8/7,0) arc(0:180:{2/77}); \draw (2,0) arc(0:180:{2/77}); \draw (-154/79,0) arc(0:180:{2/79}); \draw (0,0) arc(0:180:{2/79}); \draw (4/79,0) arc(0:180:{2/79}); \draw (2,0) arc(0:180:{2/79}); \draw (-7/4,0) arc(0:180:{1/40}); \draw (-1,0) arc(0:180:{1/40}); \draw (-19/20,0) arc(0:180:{1/40}); \draw (-1/5,0) arc(0:180:{1/40}); \draw (1/4,0) arc(0:180:{1/40}); \draw (1,0) arc(0:180:{1/40}); \draw (21/20,0) arc(0:180:{1/40}); \draw (9/5,0) arc(0:180:{1/40}); \draw (-158/81,0) arc(0:180:{2/81}); \draw (0,0) arc(0:180:{2/81}); \draw (4/81,0) arc(0:180:{2/81}); \draw (2,0) arc(0:180:{2/81}); \draw (-162/83,0) arc(0:180:{2/83}); \draw (0,0) arc(0:180:{2/83}); \draw (4/83,0) arc(0:180:{2/83}); \draw (2,0) arc(0:180:{2/83}); \draw (-166/85,0) arc(0:180:{2/85}); \draw (-8/5,0) arc(0:180:{2/85}); \draw (-6/17,0) arc(0:180:{2/85}); \draw (0,0) arc(0:180:{2/85}); \draw (4/85,0) arc(0:180:{2/85}); \draw (2/5,0) arc(0:180:{2/85}); \draw (28/17,0) arc(0:180:{2/85}); \draw (2,0) arc(0:180:{2/85}); \draw (-170/87,0) arc(0:180:{2/87}); \draw (-4/3,0) arc(0:180:{2/87}); \draw (-18/29,0) arc(0:180:{2/87}); \draw (0,0) arc(0:180:{2/87}); \draw (4/87,0) arc(0:180:{2/87}); \draw (2/3,0) arc(0:180:{2/87}); \draw (40/29,0) arc(0:180:{2/87}); \draw (2,0) arc(0:180:{2/87}); \draw (-3/2,0) arc(0:180:{1/44}); \draw (-1,0) arc(0:180:{1/44}); \draw (-21/22,0) arc(0:180:{1/44}); \draw (-5/11,0) arc(0:180:{1/44}); \draw (1/2,0) arc(0:180:{1/44}); \draw (1,0) arc(0:180:{1/44}); \draw (23/22,0) arc(0:180:{1/44}); \draw (17/11,0) arc(0:180:{1/44}); \draw (-174/89,0) arc(0:180:{2/89}); \draw (0,0) arc(0:180:{2/89}); \draw (4/89,0) arc(0:180:{2/89}); \draw (2,0) arc(0:180:{2/89}); \draw (-178/91,0) arc(0:180:{2/91}); \draw (-18/13,0) arc(0:180:{2/91}); \draw (-4/7,0) arc(0:180:{2/91}); \draw (0,0) arc(0:180:{2/91}); \draw (4/91,0) arc(0:180:{2/91}); \draw (8/13,0) arc(0:180:{2/91}); \draw (10/7,0) arc(0:180:{2/91}); \draw (2,0) arc(0:180:{2/91}); \draw (-182/93,0) arc(0:180:{2/93}); \draw (-40/31,0) arc(0:180:{2/93}); \draw (-2/3,0) arc(0:180:{2/93}); \draw (0,0) arc(0:180:{2/93}); \draw (4/93,0) arc(0:180:{2/93}); \draw (22/31,0) arc(0:180:{2/93}); \draw (4/3,0) arc(0:180:{2/93}); \draw (2,0) arc(0:180:{2/93}); \draw (-186/95,0) arc(0:180:{2/95}); \draw (-22/19,0) arc(0:180:{2/95}); \draw (-4/5,0) arc(0:180:{2/95}); \draw (0,0) arc(0:180:{2/95}); \draw (4/95,0) arc(0:180:{2/95}); \draw (16/19,0) arc(0:180:{2/95}); \draw (6/5,0) arc(0:180:{2/95}); \draw (2,0) arc(0:180:{2/95}); \draw (-13/8,0) arc(0:180:{1/48}); \draw (-1,0) arc(0:180:{1/48}); \draw (-23/24,0) arc(0:180:{1/48}); \draw (-1/3,0) arc(0:180:{1/48}); \draw (3/8,0) arc(0:180:{1/48}); \draw (1,0) arc(0:180:{1/48}); \draw (25/24,0) arc(0:180:{1/48}); \draw (5/3,0) arc(0:180:{1/48}); \draw (-190/97,0) arc(0:180:{2/97}); \draw (0,0) arc(0:180:{2/97}); \draw (4/97,0) arc(0:180:{2/97}); \draw (2,0) arc(0:180:{2/97}); \draw (-194/99,0) arc(0:180:{2/99}); \draw (-16/9,0) arc(0:180:{2/99}); \draw (-2/11,0) arc(0:180:{2/99}); \draw (0,0) arc(0:180:{2/99}); \draw (4/99,0) arc(0:180:{2/99}); \draw (2/9,0) arc(0:180:{2/99}); \draw (20/11,0) arc(0:180:{2/99}); \draw (2,0) arc(0:180:{2/99}); \draw (-198/101,0) arc(0:180:{2/101}); \draw (0,0) arc(0:180:{2/101}); \draw (4/101,0) arc(0:180:{2/101}); \draw (2,0) arc(0:180:{2/101}); \draw (-202/103,0) arc(0:180:{2/103}); \draw (0,0) arc(0:180:{2/103}); \draw (4/103,0) arc(0:180:{2/103}); \draw (2,0) arc(0:180:{2/103}); \draw (-19/13,0) arc(0:180:{1/52}); \draw (-1,0) arc(0:180:{1/52}); \draw (-25/26,0) arc(0:180:{1/52}); \draw (-1/2,0) arc(0:180:{1/52}); \draw (7/13,0) arc(0:180:{1/52}); \draw (1,0) arc(0:180:{1/52}); \draw (27/26,0) arc(0:180:{1/52}); \draw (3/2,0) arc(0:180:{1/52}); \draw (-206/105,0) arc(0:180:{2/105}); \draw (-10/7,0) arc(0:180:{2/105}); \draw (-4/3,0) arc(0:180:{2/105}); \draw (-6/5,0) arc(0:180:{2/105}); \draw (-16/21,0) arc(0:180:{2/105}); \draw (-22/35,0) arc(0:180:{2/105}); \draw (-8/15,0) arc(0:180:{2/105}); \draw (0,0) arc(0:180:{2/105}); \draw (4/105,0) arc(0:180:{2/105}); \draw (4/7,0) arc(0:180:{2/105}); \draw (2/3,0) arc(0:180:{2/105}); \draw (4/5,0) arc(0:180:{2/105}); \draw (26/21,0) arc(0:180:{2/105}); \draw (48/35,0) arc(0:180:{2/105}); \draw (22/15,0) arc(0:180:{2/105}); \draw (2,0) arc(0:180:{2/105}); \draw (-210/107,0) arc(0:180:{2/107}); \draw (0,0) arc(0:180:{2/107}); \draw (4/107,0) arc(0:180:{2/107}); \draw (2,0) arc(0:180:{2/107}); \draw (-214/109,0) arc(0:180:{2/109}); \draw (0,0) arc(0:180:{2/109}); \draw (4/109,0) arc(0:180:{2/109}); \draw (2,0) arc(0:180:{2/109}); \draw (-218/111,0) arc(0:180:{2/111}); \draw (-48/37,0) arc(0:180:{2/111}); \draw (-2/3,0) arc(0:180:{2/111}); \draw (0,0) arc(0:180:{2/111}); \draw (4/111,0) arc(0:180:{2/111}); \draw (26/37,0) arc(0:180:{2/111}); \draw (4/3,0) arc(0:180:{2/111}); \draw (2,0) arc(0:180:{2/111}); \draw (-5/4,0) arc(0:180:{1/56}); \draw (-1,0) arc(0:180:{1/56}); \draw (-27/28,0) arc(0:180:{1/56}); \draw (-5/7,0) arc(0:180:{1/56}); \draw (3/4,0) arc(0:180:{1/56}); \draw (1,0) arc(0:180:{1/56}); \draw (29/28,0) arc(0:180:{1/56}); \draw (9/7,0) arc(0:180:{1/56}); \draw (-222/113,0) arc(0:180:{2/113}); \draw (0,0) arc(0:180:{2/113}); \draw (4/113,0) arc(0:180:{2/113}); \draw (2,0) arc(0:180:{2/113}); \draw (-226/115,0) arc(0:180:{2/115}); \draw (-36/23,0) arc(0:180:{2/115}); \draw (-2/5,0) arc(0:180:{2/115}); \draw (0,0) arc(0:180:{2/115}); \draw (4/115,0) arc(0:180:{2/115}); \draw (10/23,0) arc(0:180:{2/115}); \draw (8/5,0) arc(0:180:{2/115}); \draw (2,0) arc(0:180:{2/115}); \draw (-230/117,0) arc(0:180:{2/117}); \draw (-14/13,0) arc(0:180:{2/117}); \draw (-8/9,0) arc(0:180:{2/117}); \draw (0,0) arc(0:180:{2/117}); \draw (4/117,0) arc(0:180:{2/117}); \draw (12/13,0) arc(0:180:{2/117}); \draw (10/9,0) arc(0:180:{2/117}); \draw (2,0) arc(0:180:{2/117}); \draw (-234/119,0) arc(0:180:{2/119}); \draw (-8/7,0) arc(0:180:{2/119}); \draw (-14/17,0) arc(0:180:{2/119}); \draw (0,0) arc(0:180:{2/119}); \draw (4/119,0) arc(0:180:{2/119}); \draw (6/7,0) arc(0:180:{2/119}); \draw (20/17,0) arc(0:180:{2/119}); \draw (2,0) arc(0:180:{2/119}); \draw (-9/5,0) arc(0:180:{1/60}); \draw (-5/3,0) arc(0:180:{1/60}); \draw (-3/2,0) arc(0:180:{1/60}); \draw (-1,0) arc(0:180:{1/60}); \draw (-29/30,0) arc(0:180:{1/60}); \draw (-7/15,0) arc(0:180:{1/60}); \draw (-3/10,0) arc(0:180:{1/60}); \draw (-1/6,0) arc(0:180:{1/60}); \draw (1/5,0) arc(0:180:{1/60}); \draw (1/3,0) arc(0:180:{1/60}); \draw (1/2,0) arc(0:180:{1/60}); \draw (1,0) arc(0:180:{1/60}); \draw (31/30,0) arc(0:180:{1/60}); \draw (23/15,0) arc(0:180:{1/60}); \draw (17/10,0) arc(0:180:{1/60}); \draw (11/6,0) arc(0:180:{1/60}); \draw (-238/121,0) arc(0:180:{2/121}); \draw (0,0) arc(0:180:{2/121}); \draw (4/121,0) arc(0:180:{2/121}); \draw (2,0) arc(0:180:{2/121}); \draw (-242/123,0) arc(0:180:{2/123}); \draw (-4/3,0) arc(0:180:{2/123}); \draw (-26/41,0) arc(0:180:{2/123}); \draw (0,0) arc(0:180:{2/123}); \draw (4/123,0) arc(0:180:{2/123}); \draw (2/3,0) arc(0:180:{2/123}); \draw (56/41,0) arc(0:180:{2/123}); \draw (2,0) arc(0:180:{2/123}); \draw (-246/125,0) arc(0:180:{2/125}); \draw (0,0) arc(0:180:{2/125}); \draw (4/125,0) arc(0:180:{2/125}); \draw (2,0) arc(0:180:{2/125}); \draw (-250/127,0) arc(0:180:{2/127}); \draw (0,0) arc(0:180:{2/127}); \draw (4/127,0) arc(0:180:{2/127}); \draw (2,0) arc(0:180:{2/127}); \draw (-1,0) arc(0:180:{1/64}); \draw (-31/32,0) arc(0:180:{1/64}); \draw (1,0) arc(0:180:{1/64}); \draw (33/32,0) arc(0:180:{1/64}); \draw (-254/129,0) arc(0:180:{2/129}); \draw (-56/43,0) arc(0:180:{2/129}); \draw (-2/3,0) arc(0:180:{2/129}); \draw (0,0) arc(0:180:{2/129}); \draw (4/129,0) arc(0:180:{2/129}); \draw (30/43,0) arc(0:180:{2/129}); \draw (4/3,0) arc(0:180:{2/129}); \draw (2,0) arc(0:180:{2/129}); \draw (-258/131,0) arc(0:180:{2/131}); \draw (0,0) arc(0:180:{2/131}); \draw (4/131,0) arc(0:180:{2/131}); \draw (2,0) arc(0:180:{2/131}); \draw (-262/133,0) arc(0:180:{2/133}); \draw (-32/19,0) arc(0:180:{2/133}); \draw (-2/7,0) arc(0:180:{2/133}); \draw (0,0) arc(0:180:{2/133}); \draw (4/133,0) arc(0:180:{2/133}); \draw (6/19,0) arc(0:180:{2/133}); \draw (12/7,0) arc(0:180:{2/133}); \draw (2,0) arc(0:180:{2/133}); \draw (-266/135,0) arc(0:180:{2/135}); \draw (-8/5,0) arc(0:180:{2/135}); \draw (-10/27,0) arc(0:180:{2/135}); \draw (0,0) arc(0:180:{2/135}); \draw (4/135,0) arc(0:180:{2/135}); \draw (2/5,0) arc(0:180:{2/135}); \draw (44/27,0) arc(0:180:{2/135}); \draw (2,0) arc(0:180:{2/135}); \draw (-25/17,0) arc(0:180:{1/68}); \draw (-1,0) arc(0:180:{1/68}); \draw (-33/34,0) arc(0:180:{1/68}); \draw (-1/2,0) arc(0:180:{1/68}); \draw (9/17,0) arc(0:180:{1/68}); \draw (1,0) arc(0:180:{1/68}); \draw (35/34,0) arc(0:180:{1/68}); \draw (3/2,0) arc(0:180:{1/68}); \draw (-270/137,0) arc(0:180:{2/137}); \draw (0,0) arc(0:180:{2/137}); \draw (4/137,0) arc(0:180:{2/137}); \draw (2,0) arc(0:180:{2/137}); \draw (-274/139,0) arc(0:180:{2/139}); \draw (0,0) arc(0:180:{2/139}); \draw (4/139,0) arc(0:180:{2/139}); \draw (2,0) arc(0:180:{2/139}); \draw (-278/141,0) arc(0:180:{2/141}); \draw (-4/3,0) arc(0:180:{2/141}); \draw (-30/47,0) arc(0:180:{2/141}); \draw (0,0) arc(0:180:{2/141}); \draw (4/141,0) arc(0:180:{2/141}); \draw (2/3,0) arc(0:180:{2/141}); \draw (64/47,0) arc(0:180:{2/141}); \draw (2,0) arc(0:180:{2/141}); \draw (-282/143,0) arc(0:180:{2/143}); \draw (-20/11,0) arc(0:180:{2/143}); \draw (-2/13,0) arc(0:180:{2/143}); \draw (0,0) arc(0:180:{2/143}); \draw (4/143,0) arc(0:180:{2/143}); \draw (2/11,0) arc(0:180:{2/143}); \draw (24/13,0) arc(0:180:{2/143}); \draw (2,0) arc(0:180:{2/143}); \draw (-11/9,0) arc(0:180:{1/72}); \draw (-1,0) arc(0:180:{1/72}); \draw (-35/36,0) arc(0:180:{1/72}); \draw (-3/4,0) arc(0:180:{1/72}); \draw (7/9,0) arc(0:180:{1/72}); \draw (1,0) arc(0:180:{1/72}); \draw (37/36,0) arc(0:180:{1/72}); \draw (5/4,0) arc(0:180:{1/72}); \draw (-286/145,0) arc(0:180:{2/145}); \draw (-34/29,0) arc(0:180:{2/145}); \draw (-4/5,0) arc(0:180:{2/145}); \draw (0,0) arc(0:180:{2/145}); \draw (4/145,0) arc(0:180:{2/145}); \draw (24/29,0) arc(0:180:{2/145}); \draw (6/5,0) arc(0:180:{2/145}); \draw (2,0) arc(0:180:{2/145}); \draw (-290/147,0) arc(0:180:{2/147}); \draw (-64/49,0) arc(0:180:{2/147}); \draw (-2/3,0) arc(0:180:{2/147}); \draw (0,0) arc(0:180:{2/147}); \draw (4/147,0) arc(0:180:{2/147}); \draw (34/49,0) arc(0:180:{2/147}); \draw (4/3,0) arc(0:180:{2/147}); \draw (2,0) arc(0:180:{2/147}); \draw (-294/149,0) arc(0:180:{2/149}); \draw (0,0) arc(0:180:{2/149}); \draw (4/149,0) arc(0:180:{2/149}); \draw (2,0) arc(0:180:{2/149}); \draw (-298/151,0) arc(0:180:{2/151}); \draw (0,0) arc(0:180:{2/151}); \draw (4/151,0) arc(0:180:{2/151}); \draw (2,0) arc(0:180:{2/151}); \draw (-3/2,0) arc(0:180:{1/76}); \draw (-1,0) arc(0:180:{1/76}); \draw (-37/38,0) arc(0:180:{1/76}); \draw (-9/19,0) arc(0:180:{1/76}); \draw (1/2,0) arc(0:180:{1/76}); \draw (1,0) arc(0:180:{1/76}); \draw (39/38,0) arc(0:180:{1/76}); \draw (29/19,0) arc(0:180:{1/76}); \draw (-302/153,0) arc(0:180:{2/153}); \draw (-26/17,0) arc(0:180:{2/153}); \draw (-4/9,0) arc(0:180:{2/153}); \draw (0,0) arc(0:180:{2/153}); \draw (4/153,0) arc(0:180:{2/153}); \draw (8/17,0) arc(0:180:{2/153}); \draw (14/9,0) arc(0:180:{2/153}); \draw (2,0) arc(0:180:{2/153}); \draw (-306/155,0) arc(0:180:{2/155}); \draw (-6/5,0) arc(0:180:{2/155}); \draw (-24/31,0) arc(0:180:{2/155}); \draw (0,0) arc(0:180:{2/155}); \draw (4/155,0) arc(0:180:{2/155}); \draw (4/5,0) arc(0:180:{2/155}); \draw (38/31,0) arc(0:180:{2/155}); \draw (2,0) arc(0:180:{2/155}); \draw (-310/157,0) arc(0:180:{2/157}); \draw (0,0) arc(0:180:{2/157}); \draw (4/157,0) arc(0:180:{2/157}); \draw (2,0) arc(0:180:{2/157}); \draw (-314/159,0) arc(0:180:{2/159}); \draw (-4/3,0) arc(0:180:{2/159}); \draw (-34/53,0) arc(0:180:{2/159}); \draw (0,0) arc(0:180:{2/159}); \draw (4/159,0) arc(0:180:{2/159}); \draw (2/3,0) arc(0:180:{2/159}); \draw (72/53,0) arc(0:180:{2/159}); \draw (2,0) arc(0:180:{2/159}); \draw (-11/8,0) arc(0:180:{1/80}); \draw (-1,0) arc(0:180:{1/80}); \draw (-39/40,0) arc(0:180:{1/80}); \draw (-3/5,0) arc(0:180:{1/80}); \draw (5/8,0) arc(0:180:{1/80}); \draw (1,0) arc(0:180:{1/80}); \draw (41/40,0) arc(0:180:{1/80}); \draw (7/5,0) arc(0:180:{1/80}); \draw (-318/161,0) arc(0:180:{2/161}); \draw (-12/7,0) arc(0:180:{2/161}); \draw (-6/23,0) arc(0:180:{2/161}); \draw (0,0) arc(0:180:{2/161}); \draw (4/161,0) arc(0:180:{2/161}); \draw (2/7,0) arc(0:180:{2/161}); \draw (40/23,0) arc(0:180:{2/161}); \draw (2,0) arc(0:180:{2/161}); \draw (-322/163,0) arc(0:180:{2/163}); \draw (0,0) arc(0:180:{2/163}); \draw (4/163,0) arc(0:180:{2/163}); \draw (2,0) arc(0:180:{2/163}); \draw (-326/165,0) arc(0:180:{2/165}); \draw (-52/33,0) arc(0:180:{2/165}); \draw (-72/55,0) arc(0:180:{2/165}); \draw (-16/15,0) arc(0:180:{2/165}); \draw (-10/11,0) arc(0:180:{2/165}); \draw (-2/3,0) arc(0:180:{2/165}); \draw (-2/5,0) arc(0:180:{2/165}); \draw (0,0) arc(0:180:{2/165}); \draw (4/165,0) arc(0:180:{2/165}); \draw (14/33,0) arc(0:180:{2/165}); \draw (38/55,0) arc(0:180:{2/165}); \draw (14/15,0) arc(0:180:{2/165}); \draw (12/11,0) arc(0:180:{2/165}); \draw (4/3,0) arc(0:180:{2/165}); \draw (8/5,0) arc(0:180:{2/165}); \draw (2,0) arc(0:180:{2/165}); \draw (-330/167,0) arc(0:180:{2/167}); \draw (0,0) arc(0:180:{2/167}); \draw (4/167,0) arc(0:180:{2/167}); \draw (2,0) arc(0:180:{2/167}); \draw (-11/6,0) arc(0:180:{1/84}); \draw (-23/14,0) arc(0:180:{1/84}); \draw (-31/21,0) arc(0:180:{1/84}); \draw (-1,0) arc(0:180:{1/84}); \draw (-41/42,0) arc(0:180:{1/84}); \draw (-1/2,0) arc(0:180:{1/84}); \draw (-1/3,0) arc(0:180:{1/84}); \draw (-1/7,0) arc(0:180:{1/84}); \draw (1/6,0) arc(0:180:{1/84}); \draw (5/14,0) arc(0:180:{1/84}); \draw (11/21,0) arc(0:180:{1/84}); \draw (1,0) arc(0:180:{1/84}); \draw (43/42,0) arc(0:180:{1/84}); \draw (3/2,0) arc(0:180:{1/84}); \draw (5/3,0) arc(0:180:{1/84}); \draw (13/7,0) arc(0:180:{1/84}); \draw (-334/169,0) arc(0:180:{2/169}); \draw (0,0) arc(0:180:{2/169}); \draw (4/169,0) arc(0:180:{2/169}); \draw (2,0) arc(0:180:{2/169}); \draw (-338/171,0) arc(0:180:{2/171}); \draw (-14/9,0) arc(0:180:{2/171}); \draw (-8/19,0) arc(0:180:{2/171}); \draw (0,0) arc(0:180:{2/171}); \draw (4/171,0) arc(0:180:{2/171}); \draw (4/9,0) arc(0:180:{2/171}); \draw (30/19,0) arc(0:180:{2/171}); \draw (2,0) arc(0:180:{2/171}); \draw (-342/173,0) arc(0:180:{2/173}); \draw (0,0) arc(0:180:{2/173}); \draw (4/173,0) arc(0:180:{2/173}); \draw (2,0) arc(0:180:{2/173}); \draw (-346/175,0) arc(0:180:{2/175}); \draw (-28/25,0) arc(0:180:{2/175}); \draw (-6/7,0) arc(0:180:{2/175}); \draw (0,0) arc(0:180:{2/175}); \draw (4/175,0) arc(0:180:{2/175}); \draw (22/25,0) arc(0:180:{2/175}); \draw (8/7,0) arc(0:180:{2/175}); \draw (2,0) arc(0:180:{2/175}); \draw (-19/11,0) arc(0:180:{1/88}); \draw (-1,0) arc(0:180:{1/88}); \draw (-43/44,0) arc(0:180:{1/88}); \draw (-1/4,0) arc(0:180:{1/88}); \draw (3/11,0) arc(0:180:{1/88}); \draw (1,0) arc(0:180:{1/88}); \draw (45/44,0) arc(0:180:{1/88}); \draw (7/4,0) arc(0:180:{1/88}); \draw (-350/177,0) arc(0:180:{2/177}); \draw (-4/3,0) arc(0:180:{2/177}); \draw (-38/59,0) arc(0:180:{2/177}); \draw (0,0) arc(0:180:{2/177}); \draw (4/177,0) arc(0:180:{2/177}); \draw (2/3,0) arc(0:180:{2/177}); \draw (80/59,0) arc(0:180:{2/177}); \draw (2,0) arc(0:180:{2/177}); \draw (-354/179,0) arc(0:180:{2/179}); \draw (0,0) arc(0:180:{2/179}); \draw (4/179,0) arc(0:180:{2/179}); \draw (2,0) arc(0:180:{2/179}); \draw (-358/181,0) arc(0:180:{2/181}); \draw (0,0) arc(0:180:{2/181}); \draw (4/181,0) arc(0:180:{2/181}); \draw (2,0) arc(0:180:{2/181}); \draw (-362/183,0) arc(0:180:{2/183}); \draw (-80/61,0) arc(0:180:{2/183}); \draw (-2/3,0) arc(0:180:{2/183}); \draw (0,0) arc(0:180:{2/183}); \draw (4/183,0) arc(0:180:{2/183}); \draw (42/61,0) arc(0:180:{2/183}); \draw (4/3,0) arc(0:180:{2/183}); \draw (2,0) arc(0:180:{2/183}); \draw (-3/2,0) arc(0:180:{1/92}); \draw (-1,0) arc(0:180:{1/92}); \draw (-45/46,0) arc(0:180:{1/92}); \draw (-11/23,0) arc(0:180:{1/92}); \draw (1/2,0) arc(0:180:{1/92}); \draw (1,0) arc(0:180:{1/92}); \draw (47/46,0) arc(0:180:{1/92}); \draw (35/23,0) arc(0:180:{1/92}); \draw (-366/185,0) arc(0:180:{2/185}); \draw (-8/5,0) arc(0:180:{2/185}); \draw (-14/37,0) arc(0:180:{2/185}); \draw (0,0) arc(0:180:{2/185}); \draw (4/185,0) arc(0:180:{2/185}); \draw (2/5,0) arc(0:180:{2/185}); \draw (60/37,0) arc(0:180:{2/185}); \draw (2,0) arc(0:180:{2/185}); \draw (-370/187,0) arc(0:180:{2/187}); \draw (-14/11,0) arc(0:180:{2/187}); \draw (-12/17,0) arc(0:180:{2/187}); \draw (0,0) arc(0:180:{2/187}); \draw (4/187,0) arc(0:180:{2/187}); \draw (8/11,0) arc(0:180:{2/187}); \draw (22/17,0) arc(0:180:{2/187}); \draw (2,0) arc(0:180:{2/187}); \draw (-374/189,0) arc(0:180:{2/189}); \draw (-38/27,0) arc(0:180:{2/189}); \draw (-4/7,0) arc(0:180:{2/189}); \draw (0,0) arc(0:180:{2/189}); \draw (4/189,0) arc(0:180:{2/189}); \draw (16/27,0) arc(0:180:{2/189}); \draw (10/7,0) arc(0:180:{2/189}); \draw (2,0) arc(0:180:{2/189}); \draw (-378/191,0) arc(0:180:{2/191}); \draw (0,0) arc(0:180:{2/191}); \draw (4/191,0) arc(0:180:{2/191}); \draw (2,0) arc(0:180:{2/191}); \draw (-5/3,0) arc(0:180:{1/96}); \draw (-1,0) arc(0:180:{1/96}); \draw (-47/48,0) arc(0:180:{1/96}); \draw (-5/16,0) arc(0:180:{1/96}); \draw (1/3,0) arc(0:180:{1/96}); \draw (1,0) arc(0:180:{1/96}); \draw (49/48,0) arc(0:180:{1/96}); \draw (27/16,0) arc(0:180:{1/96}); \draw (-382/193,0) arc(0:180:{2/193}); \draw (0,0) arc(0:180:{2/193}); \draw (4/193,0) arc(0:180:{2/193}); \draw (2,0) arc(0:180:{2/193}); \draw (-386/195,0) arc(0:180:{2/195}); \draw (-24/13,0) arc(0:180:{2/195}); \draw (-4/3,0) arc(0:180:{2/195}); \draw (-46/39,0) arc(0:180:{2/195}); \draw (-4/5,0) arc(0:180:{2/195}); \draw (-42/65,0) arc(0:180:{2/195}); \draw (-2/15,0) arc(0:180:{2/195}); \draw (0,0) arc(0:180:{2/195}); \draw (4/195,0) arc(0:180:{2/195}); \draw (2/13,0) arc(0:180:{2/195}); \draw (2/3,0) arc(0:180:{2/195}); \draw (32/39,0) arc(0:180:{2/195}); \draw (6/5,0) arc(0:180:{2/195}); \draw (88/65,0) arc(0:180:{2/195}); \draw (28/15,0) arc(0:180:{2/195}); \draw (2,0) arc(0:180:{2/195}); \draw (-390/197,0) arc(0:180:{2/197}); \draw (0,0) arc(0:180:{2/197}); \draw (4/197,0) arc(0:180:{2/197}); \draw (2,0) arc(0:180:{2/197}); \draw (-394/199,0) arc(0:180:{2/199}); \draw (0,0) arc(0:180:{2/199}); \draw (4/199,0) arc(0:180:{2/199}); \draw (2,0) arc(0:180:{2/199}); \draw (-37/25,0) arc(0:180:{1/100}); \draw (-1,0) arc(0:180:{1/100}); \draw (-49/50,0) arc(0:180:{1/100}); \draw (-1/2,0) arc(0:180:{1/100}); \draw (13/25,0) arc(0:180:{1/100}); \draw (1,0) arc(0:180:{1/100}); \draw (51/50,0) arc(0:180:{1/100}); \draw (3/2,0) arc(0:180:{1/100}); 
\end{tikzpicture}
\end{center}
\caption{\\A plot of the Eisenstein series $E_4(z)$ for $-1 \le \Re z \le 1$ and $0 < \Im z \le 1$ (above) and the same plot overlaid with a tiling of $\uhp$ using fundamental domains for the action of $\SL_2(\Z)$ (below).}
\label{figure:E4}
\end{figure}

The mappings $z \mapsto z+1$ and $z \mapsto -1/z$ that occur in these functional equations generate a discrete group of linear fractional transforms of the \emph{upper half-plane} $\uhp = \{z \in \CC : \Im z > 0\}$. To put it into a broader context of matrix groups, we can let the matrix $\bigl(\begin{smallmatrix} a & b \\ c & d \end{smallmatrix}\bigr)$ act on $\uhp$ via
\[
\begin{pmatrix}a & b \\ c & d \end{pmatrix} \cdot z = \frac{az+b}{cz+d}.
\]
Then the matrices $T = \bigl(\begin{smallmatrix} 1 & 1 \\ 0 & 1 \end{smallmatrix}\bigr)$ and $S = \bigl(\begin{smallmatrix} 0 & -1 \\ 1 & 0 \end{smallmatrix}\bigr)$ satisfy $T \cdot z = z+1$ and $S \cdot z = -1/z$, and they turn out to generate the group $\SL_2(\Z)$.

The \emph{weight $k$ action} of $\SL_2(\Z)$ on functions $f \colon \uhp \to \CC$ is defined by
\[
(f |_k \gamma) (z) = (cz+d)^{-k} f\bigg(\frac{az+b}{cz+d}\bigg)
\]
for $\gamma = \bigl(\begin{smallmatrix} a & b \\ c & d \end{smallmatrix}\bigr)$. In this notation, the functional equations $E_k(z+1)=E_k(z)$ and $E_k(-1/z)=z^k E_k(z)$ imply that the Eisenstein series $E_k$ satisfies $E_k |_k \gamma = E_k$ for all $\gamma \in \SL_2(\Z)$ when $k>2$.

A \emph{modular form of weight $k$ for $\SL_2(\Z)$} is a holomorphic function $f \colon \uhp \to \CC$ such that $f |_k \gamma = f$ for all $\gamma \in \SL_2(\Z)$ and one additional condition holds, called being holomorphic at infinity. To state this condition, note that taking $\gamma=T$ shows that $f(z+1)=f(z)$, and thus we can expand $f$ as a Fourier series
\[
f(z) = \sum_{n \in \Z} a_n e^{2\pi i n z}.
\]
We say $f$ is \emph{meromorphic at infinity} if there are only finitely many nonzero coefficients $a_n$ with $n<0$, and \emph{holomorphic at infinity} if $a_n=0$ for all $n<0$. The name reflects the fact that this Fourier series governs the behavior of $f(z)$ as $\Im z$ grows, because $e^{2\pi i z} \to 0$ as $\Im z \to \infty$. The Fourier series of a modular form is often known as its \emph{$q$-series}, with~$q = e^{2\pi i z}$.

The normalization factor $1/(2\zeta(k))$ in \eqref{eq:eisenstein} ensures that the $q$-series of $E_k$ has rational coefficients, and even integral coefficients when $k$ is small. For example, one can show that $E_4(z) = 1 + 240\sum_{n \ge 1} \sigma_3(n) q^n$ and $E_6(z) = 1-504\sum_{n \ge 1} \sigma_5(n) q^n$, where $\sigma_k(n)$ denotes the sum of the $k$-th powers of the divisors of $n$.

The product of modular forms of weights $k$ and $\ell$ is a modular form of weight $k+\ell$, and modular forms therefore form a graded ring. For $\SL_2(\Z)$, one can show that this ring is generated by $E_4$ and $E_6$. In other words, the vector space of modular forms of weight $k$ for $\SL_2(Z)$ is spanned by the modular forms $E_4^j E_6^\ell$ with $4j+6\ell=k$.

In addition to using Eisenstein series directly, Viazovska also uses
the \emph{modular discriminant} $\Delta$, which is given by
\begin{equation} \label{eq:Delta}
\Delta(z) = \frac{E_4(z)^3 - E_6(z)^2}{1728} = q \prod_{n=1}^\infty (1-q^n)^{24}.
\end{equation}
Its key property is that it vanishes nowhere in the upper half plane, while it vanishes at infinity (in the sense that its $q$-series has no constant term).

Tur\'an said that special functions should instead be called useful functions, and modular forms are no exception to this principle. The reason we study modular forms is not that we have a special love for Eisenstein series, but rather that the functional equations $f(z+1)=f(z)$ and $f(-1/z) = z^k f(z)$ arise far more often than one might expect. For example, the $E_8$ lattice has an important modular form associated with it, namely its \emph{theta series}
\[
\Theta_{E_8}(z) = \sum_{n=0}^\infty N_n e^{2\pi i n z},
\]
where $N_n = \#\{x \in E_8 : |x|^2 = 2n\}$. In other words, the theta series is a generating function that counts the number of vectors of each length in $E_8$.

This theta series satisfies both functional equations: 
$\Theta_{E_8}(z+1) = \Theta_{E_8}(z)$ follows from the definition of $\Theta_{E_8}$ as a Fourier series, while $\Theta_{E_8}(-1/z) = z^4 \Theta_{E_8}$ amounts to Poisson summation over $E_8$ for the complex Gaussian $x \mapsto e^{\pi i z |x|^2}$, which has eight-dimensional Fourier transform $y \mapsto z^{-4} e^{\pi i (-1/z) |y|^2}$. These functional equations tell us that $\Theta_{E_8}$ is a modular form for $\SL_2(\Z)$ of weight $4$, and it must therefore be proportional to $E_4$. In fact, $\Theta_{E_8}=E_4$, because $N_0=1$. Thus, we obtain the beautiful formula $240 \sigma_3(n)$ for the number of vectors in $E_8$ of squared norm $2n$.

The theory of modular forms extends to other discrete groups, if one carefully defines what being holomorphic at infinity means.\footnote{If $\Gamma$ is a subgroup of finite index in $\SL_2(\Z)$, then the condition is that for each $\gamma \in \SL_2(\Z)$, $f|_k\gamma$ should be holomorphic at infinity. Note that $f|_k\gamma$ need not satisfy $(f|_k\gamma)(z+1) = (f|_k\gamma)(z)$, but one can check that it always satisfies $(f|_k\gamma)(z+n) = (f|_k\gamma)(z)$ for some positive integer $n$ and thus has a Fourier expansion in $e^{2\pi i z/n} = q^{1/n}$.} Viazovska's proof makes use of one more group, namely
\[
\Gamma(2) = \Bigg\{\gamma \in \SL_2(\Z) : \gamma \equiv \begin{pmatrix}1& 0 \\ 0 & 1 \end{pmatrix} \pmod{2}\Bigg\},
\]
which has index~$6$ in $\SL_2(\Z)$.
If we let 
\[
U(z) = \bigg(\sum_{n\in \Z} e^{\pi i n^2 z}\bigg)^4,
\]
$W = U|_2 T$, and $V = U-W$, then $U$, $V$, and $W$ are modular forms of weight $2$ for $\Gamma(2)$ that satisfy $U=V+W$ and
\begin{equation}\label{eq:UVWslash}
\begin{aligned}
  U |_2T &= W,  & V |_2T &= -V, & W |_2T &= U,  \\
  U |_2S &= -U, & V |_2S &= -W, & W |_2S &= -V.
\end{aligned}
\end{equation}
These identities will play a key role in the construction of Viazovska's magic function. It turns out that $U$ and $W$ generate the ring of modular forms for $\Gamma(2)$,
and therefore every modular form of weight $2k$ for $\Gamma(2)$ is a linear combination of $U^k$, $U^{k-1}W$, $U^{k-2}W^2$, \dots, $W^k$.

Because modular forms are so closely connected with lattices, it is natural to turn to modular forms when attempting to construct the magic functions. However, it is entirely unclear where we should even start, because modular forms are completely different sorts of objects from radial Schwartz functions. Figure~\ref{figure:E4} looks nothing whatsoever like Figures~\ref{figure:diagram} or~\ref{figure:magicf}, and there is no familiar transformation that makes it look any more similar.

\section{Viazovska's construction for single roots}

The first step in Viazovska's construction of the magic function $f$ is to split $f$ into eigenfunctions of the Fourier transform. Radial functions satisfy $\widehat{\widehat{f}\,\,} = f$, and so we can write $f$ as $f = f_+ + f_-$, where $f_+ := (f+\widehat{f})/2$ satisfies $\widehat{f_+}=f_+$ and $f_- := (f-\widehat{f})/2$ satisfies $\widehat{f_-}=-f_-$.  If $f$ is the magic function in eight dimensions, then $f$ and $\widehat{f}$ both have roots at $\sqrt{2n}$ for integers $n \ge 1$, and therefore $f_+$ and $f_-$ do as well. Thus, we are looking for radial Fourier eigenfunctions with specified roots. Specifically, each of $f_\pm$ should have a single root at $\sqrt{2}$ and double roots at $\sqrt{2n}$ for $n \ge 2$. These roots turn out to provide enough information to determine $f_\pm$ up to scaling, and they can then be combined to obtain $f$.

Before we construct the actual magic function, it is worth examining a simpler variant as a warm-up exercise. Instead of trying to control the behavior of $f$ to second order at $\sqrt{2n}$, we will instead control the behavior of a function $g$ to first order at $\sqrt{n}$.
This construction has no known applications to sphere packing, but it is nevertheless of intrinsic interest in Fourier analysis. We will also focus on the $-1$ eigenfunction (i.e., the case $\widehat{g}=-g$) in the single-root case, for the sake of specificity.

Viazovska found a remarkable integral transform that can construct such functions.
We will write a radial function $g \colon \R^8 \to \CC$ as a continuous linear combination of complex Gaussians $x \mapsto e^{\pi i z |x|^2}$ with $z \in \uhp$ via the contour integral
\begin{equation} \label{eq:fcontour}
g(x) = \tfrac{1}{2}\int_{-1}^1 \psi(z) e^{\pi i z |x|^2} \, dz,
\end{equation}
where $\psi$ is a holomorphic function on $\uhp$ and the contour is a semicircle centered at the origin. Under which conditions on $\psi$ will $g$ be a Fourier eigenfunction, and how can we control its values at $\sqrt{n}$?

We can obtain the values $g\big(\sqrt{n}\big)$ by imposing periodicity on $\psi$ as follows.
Suppose $\psi(z+2) = \psi(z)$ for all $z \in \uhp$, so that $\psi$ has a Fourier series of the form
\begin{equation} \label{eq:Fourierg}
\psi(z) = \sum_{n \in \Z} a_n e^{\pi i n z}.
\end{equation}
Then for integers $n \ge 0$,
\[
g\big(\sqrt{n}\big) = \tfrac{1}{2}\int_{-1}^1 \psi(z) e^{\pi i n z} \, dz = a_{-n}
\]
by orthogonality, provided that we can interchange the sum and integral. If the Fourier expansion \eqref{eq:Fourierg} has only finitely many negative terms, then $g\big(\sqrt{n}\big)$ will vanish for all but finitely many $n$.

To compute the Fourier transform of $g$, we can interchange the contour integral and Fourier transform, again assuming the integral is sufficiently well behaved. Then
\[
\widehat{g}(y) = \tfrac{1}{2}\int_{-1}^1 \psi(z) z^{-4} e^{\pi i (-1/z) |y|^2} \, dz,
\]
because the $d$-dimensional Fourier transform of the complex Gaussian $x \mapsto e^{\pi i z |x|^2}$ with $z \in \uhp$ is given by $y \mapsto (i/z)^{d/2} e^{\pi i (-1/z) |y|^2}$, and $d=8$ here. Changing variables to $u = -1/z$ shows that
\[
\widehat{g}(y) = -\tfrac{1}{2}\int_{-1}^1 \psi(-1/u) u^{2} e^{\pi i u |y|^2} \, du.
\]
In other words, taking the Fourier transform of $g$ amounts to replacing $\psi$ with $- \psi |_{-2} S$, and we obtain $\widehat{g} = -g$ if $\psi|_{-2}S = \psi$.

Let $\Gamma$ be the subgroup of $\SL_2(\Z)$ generated by $S$ and $T^2$, which has index~$3$ in $\SL_2(\Z)$. Then the conditions that $\psi|_{-2} T^2 = \psi$ (i.e., $\psi(z+2)=\psi(z)$) and $\psi|_{-2} S = \psi$ mean that $\psi$ is \emph{weakly modular of weight $-2$} for $\Gamma$. The reason why $\psi$ is less than a full-fledged modular form is that it is only meromorphic at infinity (this is unavoidable, since the weight is negative). We furthermore require $\psi$ to vanish at $\pm 1$, which will be enough to justify our integral manipulations and show that $g$ is a Schwartz function. In terms of Fourier series, this vanishing says that $\psi|_{-2} TS$ has no negative terms in its $q$-series, because $TS$ maps the cusp $i\infty$ to $1$.

We will construct an example of the form $\psi = \psi_0/\Delta$ using the $\Delta$ function from \eqref{eq:Delta}, where $\psi_0$ is a genuine modular form of weight $10$ for $\Gamma$. Note that the denominator of $\Delta$ causes no difficulties in $\uhp$, since $\Delta(z) \ne 0$ for all $z \in \uhp$, and the zero of $\Delta$ at infinity will lead to a pole of $\psi$.

The function $\psi_0$ is modular of weight $10$ for $\Gamma$, and thus also for $\Gamma(2)$ because $\Gamma(2)$ is a subgroup of $\Gamma$. In particular, $\psi_0$ must be a linear combination of $U^5$, $U^4W$, $U^3W^2$, \dots, $W^5$, because $U$ and $W$ generate the ring of modular forms for $\Gamma(2)$. The relations \eqref{eq:UVWslash} specify the action of $S$ and $T$, and they imply that the subspace invariant under $S$ is spanned by
\begin{align*}
\alpha &:= U^5 - 6U^3W^2 + 4U^2W^3,\\
 \beta &:= U^4W - 3U^3W^2 + 2U^2W^3,\text{ and}\\
\gamma &:= -U^3W^2 + 4U^2W^3 - 5UW^4 + 2W^5,
\end{align*}
with $q$-expansions
\begin{align*}
\frac{\alpha}{\Delta} & = -q^{-1} - 40q^{-1/2} +752 + \cdots, & \frac{\alpha}{\Delta}\Big|_{-2} TS &= -1024 + 90112q + \cdots,\\
 \frac{\beta}{\Delta} &= -16q^{-1/2} + 256 + \cdots, &  \frac{\beta}{\Delta}\Big|_{-2} TS &= -512 - 20480q + \cdots,\\
 \frac{\gamma}{\Delta} &= 256 - 10240q^{1/2} + \cdots, & \frac{\gamma}{\Delta}\Big|_{-2} TS &= -2q^{-1} - 32 + \cdots
\end{align*}
in terms of $q^{1/2} = e^{\pi i z}$. Now  requiring $\psi$ to vanish at $\pm 1$ determines it up to scaling as 
\begin{equation} \label{eq:gFourier}
\psi = \frac{2\beta-\alpha}{\Delta} = q^{-1} + 8q^{-1/2} - 240 - 6176 q^{1/2} - \cdots,
\end{equation}
which yields a radial Schwartz function $g \colon \R^8 \to \R$ such that $\widehat{g}=-g$ and
\[
g\big(\sqrt{n}\big) = \begin{cases} -240 & \text{if $n=0$,}\\
8 & \text{if $n=1$,}\\
1 & \text{if $n=2$, and}\\
0 & \text{if $n\ge 3$.}\\
\end{cases}
\]
Note that we do not have much flexibility here: the values $g(0)$, $g(1)$, and $g\big(\sqrt{2}\big)$ are uniquely determined by Poisson summation over $\Z^8$ and $E_8$, up to scaling.

We can rewrite the definition of $f$ in another useful form as follows. If $|x|$ is large enough (in fact, $|x|^2>2$ will suffice), then
\begin{align*}
g(x) &= \tfrac{1}{2}\int_{-1}^1 \psi(z) e^{\pi i z |x|^2} \, dz\\
&= \tfrac{1}{2}\int_{-1}^i \psi(z) e^{\pi i z |x|^2} \, dz - \tfrac{1}{2}\int_{1}^i \psi(z) e^{\pi i z |x|^2} \, dz\\
&= \tfrac{1}{2}\int_{-1}^{-1+i\infty} \psi(z) e^{\pi i z |x|^2} \, dz - \tfrac{1}{2}\int_{1}^{1+i\infty} \psi(z) e^{\pi i z |x|^2} \, dz\\
&= \frac{e^{-\pi i |x|^2} - e^{\pi i |x|^2}}{2} \int_0^{i\infty} \psi(u+1) e^{\pi i u |x|^2}\, du.
\end{align*}
In these manipulations, the second line merely breaks the integral in two, the third line uses the fact that
\[
\int_{-1 + iR}^{1+iR} \psi(z) e^{\pi i z |x|^2} \, dz \to 0
\]
as $R \to \infty$ (which holds if $|x|^2$ is large enough), and the fourth line uses $\psi(u-1)=\psi(u+1)$.

In other words, $g(x)$ is given by $\sin(\pi |x|^2)$ times the Laplace transform of $t \mapsto \psi(it+1)$ evaluated at $\pi |x|^2$:
\begin{equation} \label{eq:singleLaplace}
g(x) = \sin (\pi |x|^2) \int_0^{\infty} \psi(it+1) e^{-\pi t |x|^2}\, dt.
\end{equation}
While the original integral \eqref{eq:fcontour} converges for all $x$, this integral converges only when $|x|^2$ is large enough
for the Gaussian factor $e^{-\pi t |x|^2}$ to counteract the growth of $\psi(it+1)$ as $t \to \infty$. In particular, \eqref{eq:gFourier} implies that
\[
\psi(it+1) = e^{2\pi t} - 8e^{\pi t} - 240 + 6176 e^{-\pi t} - \cdots
\]
as $t \to \infty$, which means we need $|x|^2>2$.
We can use this expansion to analytically continue $g$ by removing the divergent terms:
\begin{align*}
g(x) &= \sin(\pi |x|^2) \int_0^\infty (e^{2\pi t} - 8e^{\pi t} - 240)e^{-\pi t |x|^2}\, dt\\
& \quad \phantom{} + \sin (\pi |x|^2) \int_0^{\infty} (\psi(it+1)-e^{2\pi t} + 8e^{\pi t} + 240) e^{-\pi t |x|^2}\, dt\\
&= \frac{\sin(\pi |x|^2)}{\pi(|x|^2-2)}- \frac{8\sin(\pi |x|^2)}{\pi(|x|^2-1)} - \frac{240 \sin(\pi |x|^2)}{\pi |x|^2} \\
& \quad \phantom{} + \sin (\pi |x|^2) \int_0^{\infty} (\psi(it+1)-e^{2\pi t} + 8e^{\pi t} + 240) e^{-\pi t |x|^2}\, dt,
\end{align*}
and this last formula holds regardless of $|x|$, with removable singularities at $|x| = 0$, $1$, and $\sqrt{2}$.

\section{Viazovska's construction for double roots}

We are now in a position to obtain the magic function in eight dimensions. First, we will obtain the $-1$ eigenfunction $f_-$.
It is not immediately clear how to generalize the contour integral \eqref{eq:fcontour} from single to double roots, but the Laplace transform formula \eqref{eq:singleLaplace} generalizes elegantly. To obtain $f_-$,
we will look for a special function $\psi$ such that
\[
f_-(x) = - 4 i \sin (\pi |x|^2/2)^2 \int_0^{i\infty} \psi(z) e^{\pi i z |x|^2 } \, dz
\]
when $|x|$ is large enough. If we write $-4 \sin (\pi |x|^2/2)^2 = e^{-\pi i |x|^2} + e^{\pi i |x|^2} - 2$, we find that
\begin{align*}
f_-(x) &= \int_{-1}^{-1+i\infty} \psi(z+1) e^{\pi i |x|^2 z} \, dz +  \int_{1}^{1+i\infty} \psi(z-1) e^{\pi i |x|^2 z} \, dz\\
& \quad \phantom{}- 2  \int_{0}^{i\infty} \psi(z) e^{\pi i |x|^2 z} \, dz.
\end{align*}
We will construct a function $\psi$ such that $\psi$ is holomorphic on $\uhp$ and $\psi(z)$ is exponentially bounded as $\Im z \to \infty$. Under these conditions, when $|x|$ is sufficiently large
we can shift the contours and combine the integrals to obtain
\begin{align*}
f_-(x) &= \int_{-1}^i \psi(z+1) e^{\pi i |x|^2 z} \, dz + \int_1^i \psi(z-1) e^{\pi i |x|^2 z} \, dz\\
& \quad \phantom{} - 2 \int_0^i \psi(z) e^{\pi i |x|^2 z} \, dz + \int_i^{i\infty} \big(\psi(z+1) + \psi(z-1) - 2\psi(z)\big) e^{\pi i |x|^2 z} \, dz,
\end{align*}
with the contours shown in Figure~\ref{figure:fourcontours}.
This formula will be the analogue of \eqref{eq:fcontour}, and it will define $f_-(x)$ for all $x$.

\begin{figure}[t]
\begin{center}
\begin{tikzpicture}
\draw[white] (-4,0)--(4,0); 
\draw (0,0)--(0,4);
\draw[-stealth] (2,0) arc(0:45:2);
\draw[-stealth] (-2,0) arc(180:135:2);
\draw (2,0) arc(0:180:2);
\fill (0,0) circle (0.05);
\draw (0,0) node[below] {$0$};
\fill (2,0) circle (0.05);
\draw (2,0) node[below] {$1$};
\fill (-2,0) circle (0.05);
\draw (-2,0) node[below] {$-1$};
\fill (0,2) circle (0.05);
\draw (0,2) node[below right] {$i$};
\draw[-stealth] (0,0)--(0,1);
\draw[-stealth] (0,2)--(0,3);
\draw (0,1) node[left] {$-2 \psi(z)$};
\draw (0,3) node[left] {$\psi(z+1) + \psi(z-1) - 2\psi(z)$};
\draw (-1.4142135,1.4142135) node[above left] {$\psi(z+1)$};
\draw (1.4142135,1.4142135) node[above right] {$\psi(z-1)$};
\draw (0,4) node[above] {$\vdots$};
\end{tikzpicture}
\end{center}
\caption{\\The contours used to obtain $f_-(x)$, labeled with their integrands (omitting $e^{\pi i |x|^2 z} \, dz$).}
\label{figure:fourcontours}
\end{figure}

Taking the Fourier transform amounts to replacing $e^{\pi i |x|^2 z}$ with $z^{-4} e^{\pi i |y|^2 (-1/z)}$ in the formula defining $f_-$:
\begin{align*}
\widehat{f_-}(y) &= \int_{-1}^i \psi(z+1) z^{-4} e^{\pi i |y|^2 (-1/z)} \, dz + \int_1^i \psi(z-1) z^{-4} e^{\pi i |y|^2 (-1/z)} \, dz\\
& \quad \phantom{} - 2 \int_0^i \psi(z) z^{-4} e^{\pi i |y|^2 (-1/z)} \, dz\\
& \quad \phantom{} + \int_i^{i\infty} \big(\psi(z+1) + \psi(z-1) - 2\psi(z)\big) z^{-4} e^{\pi i |y|^2 (-1/z)} \, dz.
\end{align*}
We can now set $u = -1/z$, which exchanges the four contours in pairs. The simplest way to obtain $\widehat{f_-} = -f_-$
would be if the resulting formula is exactly the negative of the formula with which we began. That amounts to the functional
equations
\begin{flalign*}
&& \psi |_{-2} TS &= - \psi |_{-2} T^{-1} &&\\
&\text{and}\hidewidth\\
&& 2\psi |_{-2} S &= 2\psi - \psi |_{-2} T - \psi|_{-2} T^{-1}. &&
\end{flalign*}
Note that the structure of these equations reflects the integrands.

Now the question is which sorts of functions $\psi$ satisfy these functional equations.
The simplest possibility would be some sort of  modular form. The functional equations are not consistent with invariance under $S$ and $T$, and so $\psi$ cannot be modular for the full group $\SL_2(\Z)$. Let us suppose instead that $\psi$ is weakly modular of weight $-2$ for $\Gamma(2)$ (i.e., invariant under $\Gamma(2)$ but only meromorphic at infinity).
Then $\psi |_{-2}T = \psi |_{-2} T^{-1}$, because $T^2 \in \Gamma(2)$, and our functional equations become $\psi |_{-2}  TS = - \psi |_{-2} T$ and $\psi = \psi |_{-2}  T + \psi |_{-2}  S$.
Furthermore, the second equation implies the first, because $S^2 = I$. We will therefore obtain the eigenfunction equation $\widehat{f_-} = -f_-$ as long as $\psi$ is weakly modular of weight $-2$ for $\Gamma(2)$ and satisfies $\psi = \psi |_{-2}  T + \psi |_{-2}  S$.

As in the single-root case, it is natural to multiply $\psi$ by $\Delta$ to try to eliminate a pole at infinity. Then $\psi \Delta$ will be a genuine modular form of weight $10$ for $\Gamma(2)$, and thus a linear combination of $U^5$, $U^4W$, $U^3W^2$, \dots, $W^5$. One can check that the solutions of the remaining functional equation form a two-dimensional subspace, spanned by
\[
\alpha := 2U^4W - 4U^3W^2 + U^2W^3 +UW^4 \quad\text{and}\quad \beta := 5U^4W - 10U^3W^2 + 5U^2W^3 + W^5,
\]
with
\[
\frac{\alpha}{\Delta} = -16q^{-1/2} + 768 + \cdots \quad\text{and}\quad \frac{\beta}{\Delta} = q^{-1} - 40 q^{-1/2} + 2064 + \cdots .
\]
We will take
\[
\psi = \frac{-5\alpha + 2\beta}{\Delta} = 2 q^{-1} + 288 + \cdots,
\]
so that we eliminate the $q^{-1/2}$ term in the $q$-series. The motivation for eliminating that term is that it prevents $f_-$ from having a pole at radius~$1$. To see why,
let us analytically continue
\[
f_-(x) = 4  \sin (\pi |x|^2/2)^2 \int_0^{\infty} \psi(it) e^{-\pi t |x|^2 } \, dt
\]
as in the single-root case. If $\psi(it) = a_2 e^{2\pi t} + a_1 e^{\pi t} + a_0 + \cdots$ as $t \to \infty$, then
\begin{align*}
f_-(x) &= \frac{4a_2\sin (\pi |x|^2/2)^2}{\pi(|x|^2-2)}- \frac{4a_1\sin (\pi |x|^2/2)^2}{\pi(|x|^2-1)} - \frac{4a_0\sin (\pi |x|^2/2)^2}{\pi |x|^2} \\
& \quad \phantom{} + 4\sin (\pi |x|^2/2)^2 \int_0^{\infty} (\psi(it) - a_2 e^{2\pi t} - a_1 e^{\pi t} - a_0 ) e^{-\pi t |x|^2}\, dt.
\end{align*}
Here the $a_1$ term has a pole unless $a_1=0$. For our choice of $\psi$, $(a_2,a_1,a_0) = (2,0,288)$, and thus $f_-$ has a single root at $\sqrt{2}$ and
double roots at $\sqrt{2n}$ for $n \ge 2$. One can also check that $\psi(it)$ vanishes as $t \to 0+$ (equivalently, $\psi |_{-2} S$ vanishes at infinity), which is
enough for $f_-$ to be a Schwartz function and to justify all our integral manipulations.

We have therefore obtained a magic eigenfunction $f_-$ as
\[
f_-(x) = 4 \sin (\pi |x|^2/2)^2 \int_0^{\infty} \psi(it) e^{-\pi t |x|^2 } \, dt
\]
for $|x|^2 > 2$, where
\begin{equation} \label{eq:psidef}
\psi = \frac{W^3(5U^2 - 5UW + 2W^2)}{\Delta}.
\end{equation}
Our scaling here does not yet match the magic function for sphere packing, but aside from that we have exactly what we need.

Equation \eqref{eq:psidef} implies that $\psi(it) > 0$ for all $t \in (0,\infty)$. (Specifically, $\Delta(it)>0$ thanks to its product formula, $W(it)>0$ since it is the fourth power of a real quantity,
and $5U(it)^2 - 5U(it)W(it) + 2W(it)^2 > 0$ since it is a positive-definite quadratic form.) It follows that $f_-$ never changes sign beyond radius $\sqrt{2}$, in accordance with our expectations. However, note that our eigenfunction is positive beyond radius $\sqrt{2}$,
and so we will have to correct its sign later to match the magic function.

All that remains is to construct a magic eigenfunction $f_+$ and take a suitable linear combination of $f_+$ and $f_-$ to obtain $f$. Constructing $f_+$ is very much like constructing $f_-$.
If we define $f_+$ for $|x|$ sufficiently large by
\[
f_+(x) = - 4 i \sin (\pi |x|^2/2)^2 \int_0^{i\infty} \phi(z) e^{\pi i z |x|^2 } \, dz
\]
for some holomorphic function $\phi \colon \uhp \to \CC$, then the eigenfunction equation $\widehat{f_+} = f_+$ will follow from the functional equations
\begin{flalign*}
&& \phi |_{-2} TS &= \phi |_{-2} T^{-1} &&\\
&\text{and}\hidewidth\\
&& 2\phi |_{-2} S &= -2\phi + \phi |_{-2} T + \phi|_{-2} T^{-1}. &&
\end{flalign*}
These are the same functional equations as we required for $\psi$, except for a factor of $-1$.

A little manipulation using $(ST)^3=I$ shows that the first functional equation is equivalent to $\phi |_{-2} ST = \phi|_{-2} S$. Thus, if we set $\chi := \phi |_{-2}S$, then $\chi$ must be invariant under $T$.
However, the second functional equation is more subtle. A short calculation shows that if $\chi |_{0} S = \chi$ (equivalently, $(\chi |_{-2} S)(z) = z^2 \chi(z)$), then the second functional equation holds. In other words, it is enough for $\chi$ to be weakly
modular of weight $0$ for $\SL_2(\Z)$. However, such functions turn out not to be sufficient to obtain $f_+$. If one tries to solve for undetermined coefficients to construct $f_+$, as in the $f_-$ case, one finds that there is no solution with the needed properties.

Instead, we can use \emph{quasimodular forms}, not just modular forms. Recall that the Eisenstein series $E_2$ was not a modular form of weight $2$, because conditional convergence interfered with the series manipulations needed to prove modularity. If we let
\[
E_2(z) = 1-24 \sum_{n \ge 1} \sigma_1(n) q^n,
\]
then $E_2$ turns out to satisfy
\[
z^{-2} E_2(-1/z) = E_2(z) - \frac{6i}{\pi z},
\]
with the $6i/(\pi z)$ term amounting to the deviation from modularity. A \emph{quasimodular form of weight $k$ and depth $\ell$} for $\SL_2(\Z)$ is a sum $f_k + f_{k-2}E_2 + \dots + f_{k-\ell} E_2^\ell$, where each $f_j$ is a modular form of weight $k-2j$.

Instead of just a weakly modular form of weight $0$, one can check that the function $\chi$ can be a weakly quasimodular form of weight $0$ and depth~$2$ for $\SL_2(\Z)$.
Now we have enough flexibility to construct $f_+$, and calculations much like those in the $f_-$ case lead to
\[
\chi = \frac{(E_2E_4-E_6)^2}{\Delta},
\]
up to scaling. See Figure~\ref{figure:Viazovskamodular} for plots of the quasimodular forms that yield $f_-$ and $f_+$.

\begin{figure}[t]
\begin{center}
\includegraphics[width=4.07in,height=2.07in]{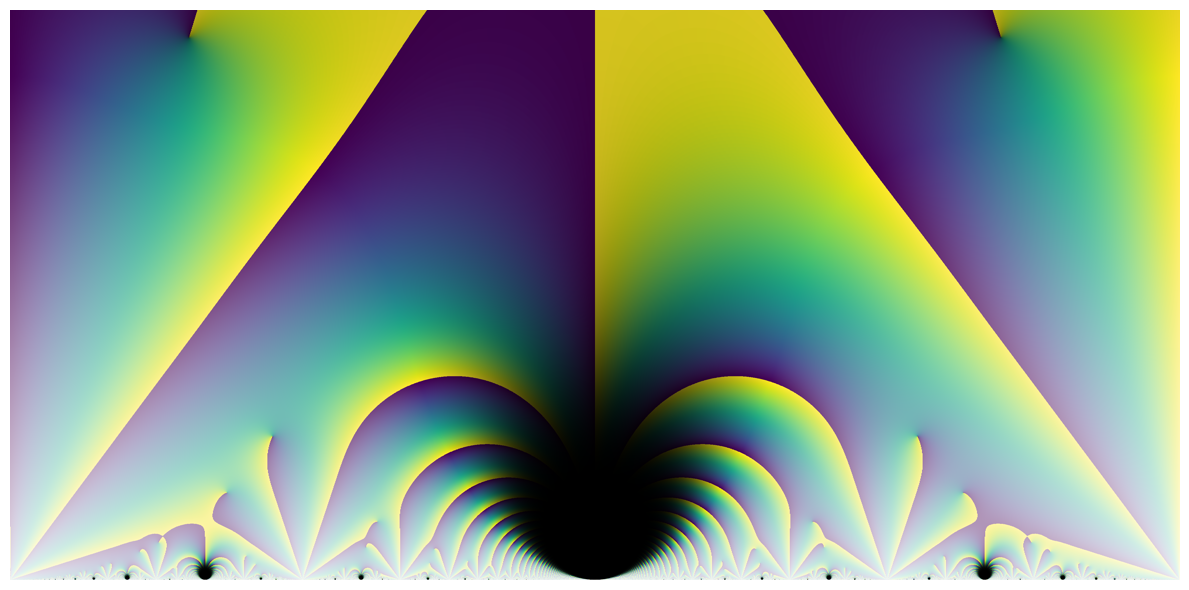}
\end{center}
\begin{center}
\includegraphics[width=4.07in,height=2.07in]{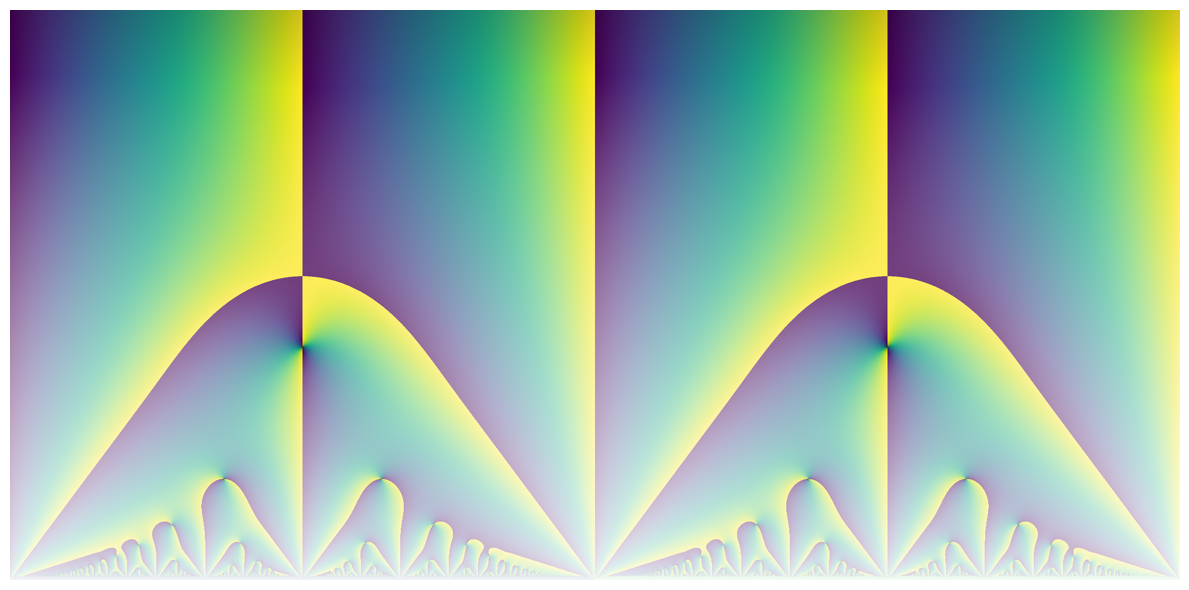}
\end{center}
\caption{\\Plots of $\psi(z)\Delta(z)$  (above) and $(\varphi|_{-2}S)(z)\Delta(z)$ (below) for $-1 \le \Re z \le 1$ and $0 < \Im z \le 1$.}
\label{figure:Viazovskamodular}
\end{figure}

Now that we have obtained both magic eigenfunctions, we can construct the magic function $f$ as a linear combination of them. First, we rescale $\phi$ so that $f_+(0)=1$, and then we rescale $\psi$ so that
$f_-'\big(\sqrt{2}\big) = f_+'\big(\sqrt{2}\big)$, to obtain a double root at $\sqrt{2}$ for $\widehat{f}$.
Using these scalings, the eight-dimensional magic function is given by
\[
f(x) = 4 \sin(\pi |x|^2/2)^2 \int_0^\infty (\phi(it) + \psi(it)) e^{-\pi t |x|^2} \, dt
\]
for $|x|^2 > 2$, and the eigenfunction property implies that
\[
\widehat{f}(y) = 4 \sin(\pi |y|^2/2)^2 \int_0^\infty (\phi(it) - \psi(it)) e^{-\pi t |y|^2} \, dt
\]
for all $y \ne 0$ (this integral turns out to converge whenever $|y|>0$, because the exponential growth in $\phi(it)$ and $\psi(it)$ as $t \to \infty$ cancels).

The final step in the proof of Theorem~\ref{thm:dim8} is to check the inequalities that are needed for Theorem~\ref{thm:LPbound}, namely $f(x) \le 0$ for $|x| \ge 2$ and $\widehat{f}(y) \ge 0$ for all $y$, to make sure there are no unexpected sign changes between the roots $\sqrt{2n}$. In principle that might seem difficult, because integral transforms of quasimodular forms could be complicated. However, these inequalities hold for the simplest reason one could hope for:
\[
\phi(it) + \psi(it) < 0 \quad\text{and}\quad \phi(it) - \psi(it) > 0
\]
for all $t > 0$. In other words, the desired inequalities hold directly at the level of the quasimodular forms themselves. This can be checked rigorously in any of several ways. For example, one can use asymptotics to check the inequalities as $t \to 0$ or $t \to \infty$, and then use interval arithmetic to verify them on the remaining bounded interval.

Overall, this proof feels like a miracle. Everything falls beautifully into place, with Viazovska's constructions having just enough flexibility to complete the proof in a unique way. What I find most impressive is the number of ingenious ideas required for the full proof. The single-root construction is itself remarkable, generalizing it to $f_-$ is even more so, and still more ideas are required for $f_+$. Viazovska is a master of special functions, whose work would surely have excited Jacobi and Ramanujan.

\section{Interpolation and consequences}

Along the way to proving the optimality of $E_8$, Viazovska made the bold conjecture that the magic function is uniquely determined by its required roots, and that more generally a radial Schwartz function on $\R^8$ is uniquely determined by its values and radial derivatives at the radii $\sqrt{2n}$ and those of its Fourier transform. It is far from obvious that it is possible in principle to reconstruct a radial Schwartz function from discrete data of this sort.

Radchenko and Viazovska took a major step in this direction by proving a one-dimensional analogue for first-order interpolation, and the second-order theorem was proved by Cohn, Kumar, Miller, Radchenko, and Viazovska.

\begin{theorem}[Radchenko and Viazovska \cite{RV2019}] There exist Schwartz functions $a_n \colon \R \to \R$ such that for every Schwartz function $f \colon \R \to \R$ and $x \in \R$,
\[
f(x) = \sum_{n \in \Z} f\big(\sqrt{n}\big) \, a_n(x) + \sum_{n \in \Z} \widehat{f}\big(\sqrt{n}\big) \, \widehat{a}_n(x).
\]
\end{theorem}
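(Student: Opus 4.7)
The plan is to transplant Viazovska's $E_8$ argument into dimension one, replacing weight $-2$ modular forms for $\Gamma(2)$ by weight $-1/2$ modular forms for the theta group. The \emph{first step} is a reduction. Since the Fourier transform preserves parity, one splits $f$ into even and odd parts and treats them separately (the two parities together account for the indexing over $\Z$, since an odd function needs both $\sqrt{n}$ and $-\sqrt{n}$). Within each parity class, decompose further into $\pm 1$ Fourier eigenfunctions $f_\pm = (f \pm \widehat{f})/2$, so that the task reduces to the following: for each sign $\epsilon \in \{+,-\}$ and each nonnegative integer $n$, construct a Schwartz eigenfunction $a_n^\epsilon \colon \R \to \R$ with $\widehat{a_n^\epsilon} = \epsilon\, a_n^\epsilon$ and $a_n^\epsilon(\sqrt{m}) = \delta_{nm}$ for all $m \ge 0$. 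Given these, set $a_n = (a_n^+ + a_n^-)/2$; the relation $\widehat{a}_n = (a_n^+ - a_n^-)/2$ then follows, and the stated identity assembles formally for eigenfunctions (hence for general Schwartz $f$) once convergence is in hand.

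The \emph{second step} is the construction of $a_n^\epsilon$ using the template of \eqref{eq:fcontour}, but with a different weight. I would look for
\[
a_n^\epsilon(x) = \int_{-1}^{1} \psi_n^\epsilon(z)\, e^{\pi i z x^2}\, dz,
\]
where $\psi_n^\epsilon$ is a weakly modular form of weight $-1/2$ for the theta group $\Gamma_\theta = \langle T^2, S\rangle$ with a suitable multiplier system. The weight changes from $-2$ to $-1/2$ because the one-dimensional Gaussian Fourier identity
\[
\int_{\R} e^{\pi i z x^2}\, e^{-2\pi i x y}\, dx = (i/z)^{1/2}\, e^{\pi i (-1/z) y^2}
\]
picks up only a factor of $(i/z)^{1/2}$ instead of $(i/z)^4$. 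The eigenfunction property $\widehat{a_n^\epsilon} = \epsilon\, a_n^\epsilon$ translates, by mimicking the contour-change computation that gave $\psi|_{-2}S = \psi$ in \S 3, into the functional equation $\psi_n^\epsilon|_{-1/2}S = \epsilon\, \psi_n^\epsilon$. The interpolation requirement $a_n^\epsilon(\sqrt{m}) = \delta_{nm}$ becomes, by orthogonality on the semicircular contour, the condition that the $q$-expansion (with $q = e^{\pi i z}$) of $\psi_n^\epsilon$ equal $q^{-n}$ plus terms that can be cancelled by imposing vanishing at $z = \pm 1$. To realize this, I would follow the recipe of \S 3: the ring of half-integral-weight modular forms for $\Gamma_\theta$ is generated (up to multiplier) by suitable powers of Jacobi theta series, and dividing by an appropriate $\eta$-product (the analogue of $\Delta$) furnishes the pole at infinity needed for negative weight. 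Intersecting with the $\epsilon$-eigenspace under $S$ and imposing vanishing at $\pm 1$ then specifies $\psi_n^\epsilon$ uniquely up to its leading $q^{-n}$ coefficient.

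The \emph{third step} is to prove that the interpolation series actually converges and reproduces $f$. The cleanest conceptual route is to package the basis into a generating kernel
\[
K^\epsilon(x, z) := \sum_{n \ge 0} a_n^\epsilon(x)\, e^{\pi i n z},\qquad z \in \uhp,
\]
which by construction transforms like a weight $1/2$ modular form for $\Gamma_\theta$ in $z$. Pairing $K^\epsilon(x, z)$ against the theta-like generating series $f^\sharp(z) = \sum_{n \ge 0} f_\epsilon(\sqrt{n})\, e^{\pi i n z}$ on a suitable contour, and then deforming using the modularity of $K^\epsilon$, collapses to $f_\epsilon(x)$ by a one-dimensional Poisson-summation-style identity. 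The \textbf{main obstacle} is the analytic control. One must establish uniform Schwartz-type bounds on $a_n^\epsilon(x)$ strong enough in both $n$ and $x$ for the series to converge absolutely for every Schwartz $f$, and one must bound the growth of $f^\sharp(z)$ as $\Im z \to 0$ well enough to justify the contour deformation. Both issues reduce to sharp asymptotics on the weight $-1/2$ modular forms $\psi_n^\epsilon$ as $n \to \infty$. By contrast, the modular-form construction itself and the final collapse to $f(x)$ are essentially a one-dimensional replay of the $E_8$ calculation, once the correct weight, group, and multiplier system have been pinned down.
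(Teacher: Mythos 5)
This survey does not actually prove the Radchenko--Viazovska theorem; it states it and cites \cite{RV2019}, so your proposal has to be measured against that proof, whose broad template (contour integrals of weakly holomorphic modular forms for the theta group $\Gamma_\theta = \langle T^2, S\rangle$ against Gaussians, $\pm 1$ eigenfunction decomposition, a generating kernel $\sum_n a_n(x)e^{\pi i n z}$) you have correctly identified. But two of your concrete steps fail as written. First, the weight is wrong: when you substitute $u=-1/z$, the measure contributes $dz = du/u^2$, shifting the weight by $2$, so in dimension $d$ the Fourier transform replaces $\psi$ by (a constant times) $\psi|_{2-d/2}S$. For $d=8$ this gives the weight $-2$ seen in Section~3; for $d=1$ it gives weight $3/2$ (and weight $1/2$ for the odd part), not $-1/2$. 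Radchenko and Viazovska indeed work with weakly holomorphic forms of weight $3/2$ for $\Gamma_\theta$ (built from $\theta^3$ and a Hauptmodul, not from an $\eta$-quotient supplying a pole for negative weight), and with the wrong weight your candidate space for $\psi_n^\epsilon$ and the dimension count behind ``specifies $\psi_n^\epsilon$ uniquely'' are not the right ones. Second, your step-one reduction demands eigenfunctions with $a_n^\epsilon\big(\sqrt{m}\big)=\delta_{nm}$ for all $m\ge 0$ and both signs, and this is impossible: Poisson summation over $\Z$ forces any even Schwartz $g$ with $\widehat{g}=-g$ to satisfy $g(0)+2\sum_{k\ge 1}g(k)=0$, so no $-1$ eigenfunction can equal $1$ at one of the points $\sqrt{m}$ with $m$ a perfect square (including $m=0$) and $0$ at the others. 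The functionals $f\mapsto f\big(\sqrt{n}\big)$, $f\mapsto\widehat{f}\big(\sqrt{n}\big)$ are not linearly independent, and the interpolation formula holds despite, not via, exact biorthogonality; this is precisely why \cite{RV2019} construct the generating kernels directly by solving a functional equation for $\Gamma_\theta$ rather than prescribing a delta basis term by term.

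Beyond these two repairs, the part you label the main obstacle is in fact where most of the proof lives, and your proposal leaves it open: one needs growth estimates on the Fourier coefficients of the relevant weight-$3/2$ weakly holomorphic forms, uniform enough to give bounds of polynomial type on $a_n(x)$ uniformly in $x$, so that the series converges absolutely for every Schwartz $f$; and one then needs an argument that the formula reproduces $f$, which in \cite{RV2019} proceeds by verifying it on the Gaussians $x\mapsto e^{\pi i z x^2}$ (where it is equivalent to the kernel's functional equations) and extending by density and continuity. So: right template and right group, but fix the weight, replace the exact biorthogonality requirement by the kernel construction (handling the perfect-square indices through the Poisson relation), and be aware that the analytic estimates you defer are the bulk of the work rather than a final check.
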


\begin{theorem}[Cohn, Kumar, Miller, Radchenko, and Viazovska \cite{CKMRV2022}] \label{theorem:interpolation}
Let $(d,n_0)$ be $(8,1)$ or $(24,2)$. Then every radial Schwartz function $f \colon \R^d \to \R$ is uniquely determined by the values
$f\big(\sqrt{2n}\big)$, $f'\big(\sqrt{2n}\big)$,
$\widehat{f}\big(\sqrt{2n}\big)$, and $\widehat{f}\,'\big(\sqrt{2n}\big)$ for
integers $n \ge n_0$.  Specifically, there exists an interpolation basis
$a_n, b_n $
for $n \ge n_0$ such that for every radial Schwartz function $f$ and $x
\in \R^d$,
\[
\begin{split}
f(x) &= \sum_{n=n_0}^\infty f\big(\sqrt{2n}\big)\,a_n(x)+\sum_{n=n_0}^\infty
f'\big(\sqrt{2n}\big)\,b_n(x)\\
&\quad\phantom{}+\sum_{n=n_0}^\infty \widehat{f}\big(\sqrt{2n}\big)\,\widehat{a}_n(x)
+\sum_{n=n_0}^\infty \widehat{f}\,'\big(\sqrt{2n}\big)\,\widehat{b}_n(x).
\end{split}     
\]
\end{theorem}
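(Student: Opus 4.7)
The plan is to recycle Viazovska's magic-function machinery from Sections~3--4 to build the interpolation basis, and then prove that the resulting series reproduces any Schwartz function by verifying the identity on Gaussians. Since the Fourier transform is an involution on radial Schwartz functions on $\R^d$, I would split both the data and the basis into $\pm 1$ Fourier eigenfunctions: write $a_n = a_n^+ + a_n^-$ with $\widehat{a_n^\pm} = \pm a_n^\pm$, and likewise $b_n = b_n^+ + b_n^-$. The pairs $(a_n^-, b_n^-)$ will be built from weakly modular forms $\psi_n$ of weight $-2$ for $\Gamma(2)$ satisfying $\psi_n = \psi_n|_{-2}T + \psi_n|_{-2}S$, and $(a_n^+, b_n^+)$ from weakly quasimodular analogues $\chi_n$ tied to $\SL_2(\Z)$; these are exactly the conditions on the building blocks of the magic function derived in Section~4.

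For each $n \ge n_0$ I would isolate an individual basis element by choosing a generating form whose $q$-expansion is tuned to pick out the interpolation datum at index $n$. Concretely, after multiplying Viazovska's building blocks by appropriate polynomials in $U$ and $W$ to deepen the pole at $i\infty$, the solution space of the relevant functional equations grows large enough that one can match any prescribed finite pattern of leading $q$-coefficients. Via the Laplace-transform formulas of Section~4, that translates into matching any prescribed pattern of values and derivatives at the radii $\sqrt{2m}$. Solving a finite linear system then singles out an $a_n$ producing the value $1$ at $\sqrt{2n}$ and vanishing to double order everywhere else, and a $b_n$ doing the analogous thing for the radial derivative.

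Having produced the candidate basis, I would establish the interpolation identity in two stages. First, growth estimates on $a_n(x)$ and $b_n(x)$---from controlling the Laplace transforms of $\psi_n$ and $\chi_n$ using standard bounds on coefficients of weakly (quasi)modular forms---combined with the Schwartz decay of $f(\sqrt{2n})$, $f'(\sqrt{2n})$, $\widehat{f}(\sqrt{2n})$, and $\widehat{f}\,'(\sqrt{2n})$, yield absolute and locally uniform convergence of the interpolation series. Second, to see that the series really reproduces $f$, I would test it on the Gaussians $g_r(x) = e^{-\pi r |x|^2}$ for $r \in \uhp$: both sides are holomorphic in $r$, and the right-hand side, after interchanging sum and integral, becomes a theta-type series in $r$ whose transformation behavior must match $g_r$ via Poisson summation over $E_8$ (for $d = 8$) or $\Lambda_{24}$ (for $d = 24$). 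Since finite linear combinations of such Gaussians are dense in the radial Schwartz space, the identity then extends to all $f$ by continuity.

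The main obstacle I expect is this last verification---the exact modular identity between the generating series $\sum_{n \ge n_0} e^{-2\pi n r} a_n(x) + \cdots$ and a simple closed-form expression in $r$ and $x$. Equivalently, it is the uniqueness assertion: a radial Schwartz function $F$ with $F(\sqrt{2n}) = F'(\sqrt{2n}) = \widehat{F}(\sqrt{2n}) = \widehat{F}\,'(\sqrt{2n}) = 0$ for all $n \ge n_0$ must be identically zero. After splitting $F$ into Fourier eigenfunctions, this amounts to inverting Viazovska's Laplace-transform representation---recovering a generating form $\psi$ or $\chi$ from $F_\pm$ and showing that the vanishing interpolation data forces that form to be identically zero. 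The rigidity of the modular setup, the same rigidity responsible for the one-parameter family of magic-function candidates in \eqref{eq:psidef}, is what should make this inversion succeed in precisely the dimensions $8$ and $24$.
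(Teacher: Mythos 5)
You should first note that this article does not actually prove Theorem~\ref{theorem:interpolation}: it states it as a citation of \cite{CKMRV2022} and says only that the proofs construct the interpolation bases explicitly by combining Viazovska's integral transforms with broader classes of special functions. Your broad outline---splitting into $\pm 1$ Fourier eigenfunctions, building generating objects from weight $-2$ forms for $\Gamma(2)$ and quasimodular analogues, proving growth estimates, verifying the identity on complex Gaussians, and extending by density of Gaussians in the radial Schwartz space---does match the strategy of the cited proof at the level of headlines. However, there is a concrete gap in the step where you actually produce the basis.

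The mechanism you propose for the value-interpolating functions $a_n$ cannot work. Any function produced by the Section~4/5 ansatz, i.e.\ $4\sin^2(\pi|x|^2/2)\int_0^\infty \psi(it)\,e^{-\pi t|x|^2}\,dt$ with $\psi$ a weakly holomorphic form satisfying the required functional equations, vanishes at \emph{every} node: each admissible growing term $c\,e^{2\pi m t}$ in $\psi(it)$ contributes $4c\sin^2(\pi|x|^2/2)/\bigl(\pi(|x|^2-2m)\bigr)$, which has a simple zero at $|x|^2=2m$ and double zeros at the other nodes, while half-integer exponents are forbidden because they would create poles (this is exactly the $a_1=0$ condition in the text). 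So no choice of principal part---no matter how you ``deepen the pole'' by dividing by powers of $\Delta$ and solving a finite linear system---can give $a_n(\sqrt{2n})=1$; this ansatz can only prescribe radial derivatives, i.e.\ at best the $b_n$ family. Prescribing the values requires integrands whose expansions contain terms like $t\,e^{2\pi n t}$, equivalently generating functions satisfying \emph{inhomogeneous} (cocycle/Eichler-integral type) functional equations rather than the homogeneous ones you impose; constructing and estimating those is the genuinely new work in \cite{CKMRV2022}, and it is what the phrase ``broader classes of special functions'' alludes to. Two secondary points: prescribing principal parts of weakly holomorphic forms is subject to linear obstructions (pairings with cusp forms via the residue theorem), which you assert away; and your closing suggestion to prove uniqueness by ``inverting the Laplace-transform representation'' of a general Schwartz eigenfunction is not viable, since a general radial Schwartz function is not of that form---in the actual argument uniqueness is a corollary of the interpolation formula itself, proved via the Gaussian identity and density, not an independent inversion step.
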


The proofs construct the interpolation bases explicitly, by combining Viazovska's integral transform techniques with broader classes of special functions.

One consequence of radial Fourier interpolation is a stronger optimality theorem for $E_8$ and the Leech lattice. Instead of just taking into account local interactions between particles,
as in the sphere packing problem, one can study optimization problems with long-range interactions. For example, one could ask for the ground state of particles interacting via
an inverse power law. Cohn and Kumar \cite{CK2007} formulated a broad notion of optimality, called \emph{universal optimality}, and radial Fourier interpolation yields corresponding magic functions:

\begin{theorem}[Cohn, Kumar, Miller, Radchenko, and Viazovska \cite{CKMRV2022}]  \label{theorem:univopt}
The $E_8$ root lattice and the Leech lattice are universally optimal in
$\R^8$ and $\R^{24}$, respectively.
\end{theorem}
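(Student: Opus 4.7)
The plan is to invoke the linear programming framework of Cohn and Kumar \cite{CK2007} for universally optimal configurations. By Bernstein's theorem, every function that is completely monotonic in squared distance is a positive superposition of Gaussians $g_\alpha(x) := e^{-\pi \alpha |x|^2}$ with $\alpha > 0$, so it suffices to prove that $E_8$ and $\Lambda_{24}$ minimize the Gaussian energy per particle (at their natural densities) for every $\alpha > 0$. The Cohn--Kumar LP bound reduces this in turn to producing, for each $\alpha > 0$ and for $\Lambda \in \{E_8, \Lambda_{24}\}$, a radial Schwartz function $f_\alpha \colon \R^d \to \R$ with $f_\alpha(x) \le g_\alpha(x)$ for all $x$, $\widehat{f_\alpha}(y) \ge 0$ for all $y$, and second-order agreement with $g_\alpha$ and $\widehat{g_\alpha}$ at every nonzero point of $\Lambda = \Lambda^*$. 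Since these lattices realize precisely the radii $\sqrt{2n}$ with $n \ge n_0$, the matching conditions amount to controlling $f_\alpha$, $f_\alpha'$, $\widehat{f_\alpha}$, and $\widehat{f_\alpha}\,'$ at each $\sqrt{2n}$.

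Interpolation supplies $f_\alpha$ directly. Using the basis of Theorem~\ref{theorem:interpolation}, define
\[
\begin{aligned}
f_\alpha(x) &= \sum_{n \ge n_0} g_\alpha\big(\sqrt{2n}\big)\,a_n(x) + \sum_{n \ge n_0} g_\alpha'\big(\sqrt{2n}\big)\,b_n(x)\\
&\quad\phantom{} + \sum_{n \ge n_0} \widehat{g_\alpha}\big(\sqrt{2n}\big)\,\widehat{a}_n(x) + \sum_{n \ge n_0} \widehat{g_\alpha}\,'\big(\sqrt{2n}\big)\,\widehat{b}_n(x).
\end{aligned}
\]
Applied to $g_\alpha$ itself, the interpolation identity shows that this series converges to a radial Schwartz function, and by construction $f_\alpha$ and $\widehat{f_\alpha}$ match $g_\alpha$ and $\widehat{g_\alpha}$ together with their radial derivatives at every $\sqrt{2n}$. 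Thus all the matching conditions needed by the LP bound are built into the definition, and only the sign conditions remain.

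The main obstacle is verifying the two sign inequalities $f_\alpha \le g_\alpha$ and $\widehat{f_\alpha} \ge 0$ simultaneously and uniformly in $\alpha > 0$. The plan mirrors the analysis of Section~4: express both $g_\alpha - f_\alpha$ and $\widehat{f_\alpha}$ as Laplace transforms against $\alpha$-dependent quasimodular integrands for $\Gamma(2)$, in analogy with
\[
4\sin(\pi |x|^2/2)^2 \int_0^\infty \big(\phi(it) \pm \psi(it)\big)\,e^{-\pi t |x|^2}\,dt,
\]
so that the desired inequalities reduce to a single-sign statement for the integrands on the positive imaginary axis. The argument then breaks into (i) identifying the explicit quasimodular generating functions for the interpolation basis $(a_n, b_n)$, so that plugging in the Gaussian data for $g_\alpha$ yields a closed-form kernel $K_\alpha^\pm(t)$; (ii) checking the signs of $K_\alpha^\pm$ asymptotically as $t \to 0$ and $t \to \infty$; and (iii) verifying them on the remaining bounded interval by interval arithmetic. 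The delicate point, absent from Theorem~\ref{thm:dim8}, is that $\alpha$ is now a continuous parameter: one must isolate its dependence -- plausibly as a multiplicative Gaussian factor combined with an $\alpha$-independent quasimodular core -- so that a single sign argument covers all $\alpha > 0$ at once.

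Finally, once the LP inequalities hold for every Gaussian $g_\alpha$, universal optimality follows by integration. Any completely monotonic potential $p$ of squared distance is by Bernstein a positive-measure mixture of the $g_\alpha$, and integrating the individual Gaussian LP bounds against that measure yields an LP bound for $p$ that is saturated by $\Lambda$, because the matching conditions saturate every Gaussian bound. This gives universal optimality of $E_8$ in $\R^8$ and of $\Lambda_{24}$ in $\R^{24}$, completing the proof.
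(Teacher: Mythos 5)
The paper you were asked to match gives no proof of this theorem at all: it is stated as a citation to \cite{CKMRV2022}, with only the remark that the Cohn--Kumar framework \cite{CK2007} plus radial Fourier interpolation supplies the magic functions. Your high-level frame (reduce to Gaussian potentials via Bernstein, apply the Cohn--Kumar linear programming bound for energy, and build the auxiliary function from the interpolation basis of Theorem~\ref{theorem:interpolation}) is indeed the strategy of the cited work. However, your construction contains a concrete error. Sharpness of the energy LP bound for a lattice $\Lambda=\Lambda^*$ requires $f_\alpha$ to agree with $g_\alpha$ to second order at the nonzero lattice radii \emph{and} $\widehat{f_\alpha}$ to \emph{vanish} to second order at the nonzero dual lattice radii; it does not ask $\widehat{f_\alpha}$ to agree with $\widehat{g_\alpha}$ there, since the discarded Poisson-summation terms are $\widehat{f_\alpha}\big(\sqrt{2n}\big)$ themselves. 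Your displayed $f_\alpha$ feeds the full interpolation data of $g_\alpha$ into the formula of Theorem~\ref{theorem:interpolation}, and by that very theorem the series reconstructs $g_\alpha$ exactly, so $f_\alpha=g_\alpha$: then $\widehat{f_\alpha}=\widehat{g_\alpha}>0$ at the points of $\Lambda^*\setminus\{0\}$, the bound $\widehat{f_\alpha}(0)/\vol{\R^d/\Lambda}-f_\alpha(0)$ is strictly smaller than the Gaussian energy of $\Lambda$, and no optimality is certified. The correct choice is to keep only the first two sums (Gaussian values and radial derivatives against $a_n$ and $b_n$) and set the hat-side data to zero, so that $\widehat{f_\alpha}$ acquires the required double zeros.

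Even with that repaired, the decisive step --- the inequalities $f_\alpha\le g_\alpha$ and $\widehat{f_\alpha}\ge 0$, uniformly in the continuous parameter $\alpha>0$ --- is precisely what you defer to a hoped-for ``$\alpha$-independent quasimodular core'' plus interval arithmetic, and this is where essentially all of the difficulty of \cite{CKMRV2022} lies. There the functions are realized as integrals of explicit (quasi)modular kernels against a Gaussian in an auxiliary modular variable, and the sign conditions are proved for all $\alpha$ simultaneously by a lengthy analysis of those two-variable kernels; the $\alpha$-dependence does not factor out as a harmless prefactor, and a finite interval-arithmetic check on a bounded $t$-interval cannot by itself dispose of a one-parameter family of inequalities without a substantial additional argument. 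So your proposal identifies the right reduction, but the auxiliary function as defined would fail, and the core positivity argument is missing rather than merely sketched.
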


\section{The future}

Although Viazovska's work has settled several major questions, much remains to be understood. For example, the theory of interpolation for radial Schwartz functions is rapidly developing, with noteworthy connections to uniqueness theory for the Klein-Gordon equation \cite{BHMRV2021}.

One puzzling issue is two dimensions. While the two-dimensional sphere packing problem can be settled by elementary geometry, universal optimality remains a tantalizing conjecture. There seems to be a magic function for $d=2$ in Theorem~\ref{thm:LPbound}, with $r = (4/3)^{1/4}$; no proof is known, but numerical computations agree with the optimal packing density in $\R^2$ to over one thousand decimal places. Furthermore, analogous magic functions seem to exist for universal optimality in $\R^2$. However, it is unclear what sort of function space might allow a suitable interpolation theory (see Section~7 in \cite{CKMRV2022}).

There are also remarkable connections with conformal field theory and quantum gravity \cite{HMR2019}. When $d$ is even, the linear programming bound for the sphere packing density in $\R^d$ turns out to be equivalent to the spinless modular bootstrap bound for the spectral gap in a theory of $d/2$ free bosons, and the conformal bootstrap program generalizes it to a family of related bounds. How these more general bounds might relate to discrete geometry remains a mystery.


\end{document}